\newif\ifMAKEPICS
\DeclareExpandableDocumentCommand{\convertlen}{ O{cm} m }
{
	\dim_to_decimal_in_unit:nn { #2 } { 1 #1 } cm
	%\dim_to_unit:nn { #2 } { 1 #1 } cm
}
\theoremstyle{plain} % default
\newtheorem{theorem}{Theorem}[section]
\newtheorem{propositon}{Proposition}[section]
\newtheorem{lemma}[theorem]{Lemma}
\newtheorem{corollary}[theorem]{Corollary}
\newtheorem{remark}[theorem]{Remark}
\newtheorem{definition}[theorem]{Definition}
\theoremstyle{definition} %
\newtheorem{assumption}{Assumption}
\theoremstyle{remark} %
\newcommand{\ignore}[1]{}
\begin{document}
	
	\title{Two-side a posteriori error estimates for the DWR method}
	\author[1]{B. Endtmayer}
	\author[1]{U. Langer}
	\author[2]{T. Wick}

	\affil[1]{Johann Radon Institute for Computational and Applied Mathematics, Austrian Academy of Sciences, Altenbergerstr. 69, A-4040 Linz, Austria}
	\affil[2]{Leibniz Universit\"at Hannover, Institut f\"ur Angewandte
          Mathematik, AG Wissenschaftliches Rechnen, Welfengarten 1, 30167 Hannover, Germany}
	
	\date{}
	
	\maketitle

\begin{abstract}
In this work, we derive two-sided a posteriori error estimates 
for the dual-weighted residual (DWR) method. We 
consider both single and multiple goal functionals. 
Using a saturation assumption, we derive lower bounds yielding 
the efficiency of the error estimator. These results 
hold true for both nonlinear partial differential equations
and nonlinear functionals of interest. Furthermore, the DWR method employed in
this work accounts for balancing the discretization error
with the nonlinear iteration error.
We also perform careful studies of the 
remainder term that is usually neglected. Based on these 
theoretical investigations, several algorithms are designed. 
Our theoretical findings and algorithmic developments are substantiated with some numerical tests.
\end{abstract}

%##############################################################################%
\section{Introduction}
\label{Section: Introduction}
In many applications, 
nonlinear partial differential equations 
must 
be solved. 
Examples can be found in fluid mechanics, fluid structure interaction, solid
mechanics, porous media, fracture/damage mechanics, and electromechanics.  
Specifically in recent years, multiphysics problems in which several phenomena
interact have become quite important due to the advancements of 
computational resources (in particular parallel computing and local mesh
adaptivity). 
However, we are often not interested in the entire solution, 
but in certain functionals of interest,
also called goal functionals. 
Due to the nature of 
multiphysics problems, several goal functionals may be of interest
simultaneously. Motivated by this fact, basic
frameworks for the adaptive treatment of multiple goal functionals were
first proposed in \cite{HaHou03,Ha08}. Recently, other efforts 
have been undertaken in 
\cite{PARDO20101953,AlvParBar2013,KerPruChaLaf2017,BruZhuZwie16,EnWi17, EnLaWi18,EnLaWiPAMM18}.

In these studies, adaptivity is based on a posteriori error estimation, which
	 is  a widely used  and well developed tool in
finite element (FEM)  
computations 
as, for example, presented in  
\cite{BabRhein1978,BankSmith1993,ZienkiewKellyGagoBabu1982,ApSaeWhi96,NeittaanmaekiRepin:2004a,AinsworthOden1997a,Verfuerth:1996a,
  RepinBook2008,Han:2005a,ErikssonEstepHansboJohnson1995}, 
and 
in other discretization techniques too; see, e.g., 
\cite{Angermann1995,PaKwoYou2003,Kleiss:2015:GSP:2795679.2796192,LangerMatculevichRepin:2017b,StolfoRademacherSchroeder2017,WeiWi18}.

The previously mentioned applications are too complicated for a rigorous numerical analysis
that we have in mind. For this reason,
we concentrate on the development of a posteriori error
estimation for a prototype nonlinear stationary setting in this work.
Here, we focus on both fundamental theoretical and practical aspects. Our method 
of choice is goal-oriented error estimation using the dual-weighted residual
	(DWR) method
\cite{BeRa01,BeEsTru2011,RaSu97,OdPru99,BeckerRannacher1995}, which has proven
to be a successful technique. 
In particular, we are interested in the quality of the error estimator.
	Furthermore, it would be desirable to obtain convergence rates for 
	the corresponding adaptive procedure.
Such convergence results  are discussed in
\cite{MoSchwSzeTem06,FeiPraeZee16,HolPo2016,HoPoZhu2015}.
Improvements of convergence rates are discussed in 
\cite{GilPie2000,GilesSuli2002,GilPie2004,ShaOll2018}. 
Concerning upper bounds of the error, we mention the works
\cite{PruOdWeBaBo03,NoSchmiSiebVeeser2006,FeiPraeZee16,AinRan2012}, 
where \cite{PruOdWeBaBo03} also provides a lower bound for 
	the energy norm in the case of linear symmetric elliptic boundary value problems,
and \cite{NoSchmiSiebVeeser2006} for a pointwise error estimate in case of monotone semi-linear problems.

The first goal of this work is to 
	prove
upper and lower bounds 
for both nonlinear 
partial differential equations (PDEs) and nonlinear quantities of interest. 
This is done for a hierarchical approximation in the DWR error
estimator. Hierarchical approaches for the DWR method are also used in
\cite{BaRa03,HaHou03,Ha08,RiWi15_dwr,BruSchweBau18} 
	exploiting higher-order elements.
In this work, we use a partition of unity (PU) localization, which was
developed in \cite{RiWi15_dwr}. 
Here
backwards-integration by parts is not required. 
We can employ the variational form of the error estimator.
Recently, this localization was also applied to  other discretization
techniques like the finite cell method \cite{StolfoRademacherSchroeder2017}
or (boundary element method) BEM-based FEM on polygonal meshes \cite{WeiWi18}.

To 
prove
 the upper and lower bounds for our error estimator, we need a
saturation assumption for the quantity of interest. 
	For other hierarchical based  a posteriori error in  the energy norm,
	this is a widely used assumption \cite {BankWeiser1985,BoErKor1996,BankSmith1993,Verfuerth:1996a},
	where \cite{BoErKor1996} proved that the saturation assumption can be violated pre-asymptotically for certain data.
For some elliptic boundary value problems, 
the saturation assumption is proven 
in the energy norm for small data oscillations;
see e.g., \cite{DoerflerNochetto2002,FerrazOrtnerPraetorius2010,CaGaGed16} 
and \cite{BankParsaniaSauter2013} for hp-FEM, 
  and in \cite{Agouzal2002,AchAchAgou2004} for a modified version of this assumption. 
Furthermore, a proof of the saturation assumption for a convection-diffusion
problem in one dimension is derived in \cite{Rossi2002,KimKim2014}.  
However, we are not aware of results for general goal functionals.
	We notice that
this is even infeasible because the functional 
error can be zero for general goal functionals. 
Therefore, a positive lower bound cannot be obtained.
	A step into this direction was achieved in \cite{RiWi15_dwr}, where 
a common bound for the functional 
error and the error indicators 
could be established for several 
often employed localization techniques.

We emphasize that our previous developments apply to the generalized 
version of the DWR method in which not only the discretization error is
addressed, but also the iteration error can be balanced with the
discretization error \cite{BeJohRa95,ErnVohral2013,MeiRaVih109,RanVi2013,RaWeWo10}.
In particular, following \cite{RanVi2013}, in our previous work  \cite{EnLaWi18}, 
we developed and extended such a framework 
that applies to single and multiple goal functionals.
We notice that, in contrast to these works, we represent the iteration error in
the current paper in a different way, which avoids the solution of the adjoint
problem for checking the adaptive stopping criterion of Newton's method. 
This stopping criteria of Newton's method is also 
 affine-invariant, 
and falls
consequently into the category of Newton schemes discussed in \cite{Deuflhard2011}.
		
The second goal of this work 
consists in the investigation of
the several parts of the DWR error estimator. 
More precisely, we consider: both single and multiple goal functionals,
both the primal and adjoint parts, the iteration error estimator, and
the nonlinear remainder part.
In particular, the latter term is often neglected in the literature.

The outline of this paper is as follows:
In Section \ref{Section: The dual weighted residual method for nonlinear problems }, we introduce the abstract setting and shortly recap the basic concept of the dual weighted residual method. 
Section \ref{Section: Efficiency and reliability results for the DWR estimator in enriched spaces} contains our main result.  
We prove
a lower and upper bound for an error estimator with the additional computable parts as well for the common error estimator under the saturation assumption and a slightly strengthened version, respectively. 
The different 
	parts of the error estimator
and their localization are discussed in Section \ref{Section: Localization and discussions on the error estimator parts}, followed by a discussion for multiple goal functionals in Section \ref{Section: Multigoalfunctionals}. 
In this discussion,  we derive sufficient conditions to avoid error cancellation under our saturation assumption. 
		The resulting algorithms  are in detail  presented 
		in Section
                \ref{Section: Algorithms} for the finite element method. In
                principle, they can 
                also be easily applied to other discretization
                techniques like isogeometric analysis, finite volume methods, 
                finite cell methods,  or virtual element methods.
Section \ref{Section: Numerical examples} provides the results of our numerical experiments.
We performed extensive numerical tests for both single goal and multiple goal functional evaluations
at finite element solutions of the regularized $p$-Laplace equation; 
	see also \cite{DiRu07,Hi2013,ToWi2017}.
Finally, our observations are summarized in Section \ref{Section: Conclusions}.

%##############################################################################%
\section{The dual weighted residual method for nonlinear problems } 	
\label{Section: The dual weighted residual method for nonlinear problems }
	In this section, we briefly recall the abstract setting of our previous work \cite{EnLaWi18}.

\subsection{An abstract setting}
		Let $U$ and $V$ be Banach spaces, and let $\mathcal{A}: U \mapsto V^*$ be a nonlinear operator, 
		where $V^*$ denotes the dual space of the Banach space $V$.
		We consider the 
		%continuous 
		primal problem: Find $u \in U$ such that
		\begin{equation} \label{Equation: Cont Primal Problem}
		\mathcal{A}(u)=0 \qquad \text{ in } V^*.
		\end{equation}
Furthermore, we consider finite dimensional subspaces of $U_h \subset U$ and $V_h \subset V$.
	In this paper, $U_h$ and $V_h$ are finite element spaces (we notice, however,
that our ideas are not restricted to a particular discretization method).
This leads to the following finite dimensional problem: Find $u_h \in U_h$ such that
		\begin{equation} \label{Equation: Discret Primal Problem}
				\mathcal{A}(u_h)=0 \qquad \text{ in } V_h^*.
		\end{equation}
		We assume that both (\ref{Equation: Cont Primal Problem}) and (\ref{Equation: Discret Primal Problem}) are solvable. Further assumptions will be imposed la
		ter.
		However,  we are not primarily  interested in a solution of (\ref{Equation: Cont Primal Problem}) itself, 
		 but in one or even several functional evaluations, so called
                 goal functionals, evaluated  at $u\in U$.

%##############################################################################%
\subsection{The dual weighted residual method}
\label{PU-DWR-NONLINEAR-ONEFUNTIONAL}
We now recall the Dual Weighted Residual (DWR) method for nonlinear problems \cite{BeRa01}.
The extensions for balancing the discretization and iteration errors were
undertaken in \cite{RanVi2013,RaWeWo10,MeiRaVih109}. 
In particular, we base our work on \cite{RanVi2013}, where iteration errors of
the nonlinear solver  
were considered. This paper forms together with our previous works
\cite{RiWi15_dwr,EnWi17,EnLaWi18,EnLaWiPAMM18} the basis of the current
study. 
To apply the DWR method, we have to consider the adjoint problem: Find $z \in V$ such that 
	\begin{equation}\label{Equation : cont adjoint Problem}
	\left(\mathcal{A}'(u)\right)^* (z) = J'(u) \qquad \text{ in } U^*,
	\end{equation}
	where $ \mathcal{A}'(u)$ and $J'(u)$ denote the Fr\'echet-derivatives of the nonlinear operator and functional respectively, evaluated at $u$.
	Later we will also need the finite dimensional version of 
	%this, which is given by: 
%	\langer{
	(\ref{Equation : cont adjoint Problem}) that reads as follows:
%    }
	Find $z_h \in V_h$ such that 
	\begin{equation}\label{Equation : discrete adjoint Problem}
	\left(\mathcal{A}'(u_h)\right)^* (z_h) = J'(u_h) \qquad \text{ in } U_h^*.
	\end{equation}
Similarly to  the findings in \cite{RanVi2013,BeRa01,RaWeWo10} for the Galerkin case ($U=V$),  
we provide  an error representation in the following theorem: 
\begin{theorem}\label{Theorem: Error Representation}
Let us assume that $\mathcal{A} \in \mathcal{C}^3(U,V)$ and $J \in \mathcal{C}^3(U,\mathbb{R})$. 
If $u$ solves (\ref{Equation: Cont Primal Problem}) 
and $z$ solves (\ref{Equation : cont adjoint Problem}) for $u \in U$, 
then 
	the error representation

			\begin{align} \label{Error Representation}
				\begin{split}
					J(u)-J(\tilde{u})&= \frac{1}{2}\rho(\tilde{u})(z-\tilde{z})+\frac{1}{2}\rho^*(\tilde{u},\tilde{z})(u-\tilde{u}) 
					-\rho (\tilde{u})(\tilde{z}) + \mathcal{R}^{(3)},\nonumber
				\end{split}
			\end{align}
holds true for arbitrary fixed  $\tilde{u} \in U$ and $ \tilde{z} \in V$, where
$\rho(\tilde{u})(\cdot) := -\mathcal{A}(\tilde{u})(\cdot)$,
$\rho^*(\tilde{u},\tilde{z})(\cdot) := J'(u)-\mathcal{A}'(\tilde{u})(\cdot,\tilde{z})$,
			and the remainder term
			\begin{equation}
			\begin{split}	\label{Error Estimator: Remainderterm}
			\mathcal{R}^{(3)}:=\frac{1}{2}\int_{0}^{1}[J'''(\tilde{u}+se)(e,e,e)
			-\mathcal{A}'''(\tilde{u}+se)(e,e,e,\tilde{z}+se^*)
			-3\mathcal{A}''(\tilde{u}+se)(e,e,e)]s(s-1)\,ds,
			\end{split} 
			\end{equation}
			with $e=u-\tilde{u}$ and $e^* =z-\tilde{z}$.	
		\end{theorem}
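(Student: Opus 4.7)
The natural strategy is the classical Lagrangian trick of Becker--Rannacher combined with a trapezoidal-rule identity. The plan is as follows.

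First, introduce the Lagrangian $\mathcal{L}: U \times V \to \mathbb{R}$ defined by $\mathcal{L}(v,y) := J(v) - \mathcal{A}(v)(y) = J(v) + \rho(v)(y)$. The primal equation (\ref{Equation: Cont Primal Problem}) and the adjoint equation (\ref{Equation : cont adjoint Problem}) together say precisely that $(u,z)$ is a stationary point of $\mathcal{L}$, and since $\mathcal{A}(u)=0$ one has $\mathcal{L}(u,z) = J(u)$.

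Next, introduce the auxiliary scalar function $f: [0,1] \to \mathbb{R}$, $f(s) := \mathcal{L}\bigl(\tilde{u} + s e,\; \tilde{z} + s e^*\bigr)$, with $e = u - \tilde{u}$ and $e^* = z - \tilde{z}$. Then $f(1) = J(u)$ while $f(0) = J(\tilde{u}) + \rho(\tilde{u})(\tilde{z})$. Apply the following trapezoidal identity with integral remainder to $f' \in C^2[0,1]$, which is justified by the assumed $\mathcal{C}^3$ regularity:
\begin{equation*}
f(1) - f(0) \;=\; \int_0^1 f'(s)\,ds \;=\; \tfrac{1}{2}\bigl(f'(0) + f'(1)\bigr) \;+\; \tfrac{1}{2}\int_0^1 f'''(s)\,s(s-1)\,ds,
\end{equation*}
which follows from two integrations by parts against the kernel $\tfrac{1}{2}s(s-1)$ (a function that vanishes at both endpoints).

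A direct chain-rule computation gives
\begin{equation*}
f'(s) \;=\; J'(\tilde{u}+se)(e) \;-\; \mathcal{A}'(\tilde{u}+se)(e,\,\tilde{z}+se^*) \;-\; \mathcal{A}(\tilde{u}+se)(e^*).
\end{equation*}
Evaluating at $s=1$ one obtains $f'(1)=0$: the adjoint equation kills the first two terms and the primal equation kills the third. Evaluating at $s=0$ and invoking the definitions of $\rho$ and $\rho^*$, one collects $f'(0) = \rho^*(\tilde{u},\tilde{z})(e) + \rho(\tilde{u})(e^*)$. Substituting into the trapezoidal identity and rearranging reproduces the first three terms of the claimed representation, with $\rho(\tilde{u})(\tilde{z})$ appearing as the ``consistency'' term arising from $\mathcal{L}(\tilde{u},\tilde{z}) \neq J(\tilde{u})$.

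Finally, one has to identify the integral remainder with $\mathcal{R}^{(3)}$. Two further differentiations of $f$ yield
\begin{equation*}
f'''(s) \;=\; J'''(\tilde{u}+se)(e,e,e) \;-\; \mathcal{A}'''(\tilde{u}+se)(e,e,e,\tilde{z}+se^*) \;-\; 3\,\mathcal{A}''(\tilde{u}+se)(e,e,e^*),
\end{equation*}
which matches the bracket in (\ref{Error Estimator: Remainderterm}). The main obstacle is exactly this last step: the mixed term $\mathcal{A}'(\tilde{u}+se)(e,\tilde{z}+se^*)$ depends on $s$ through both slots, so each differentiation produces two contributions, and the same is true for the third term in $f'$; tracking the cross-terms carefully is what generates the combinatorial factor $3$ in front of $\mathcal{A}''(\cdot,\cdot,e^*)$, while the asymmetric fourth-slot differentiation is what prevents a similar factor in front of the $\mathcal{A}'''$ term. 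This bookkeeping is straightforward but the only place where a sign or coefficient error is easy to make.
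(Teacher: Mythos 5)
Your proof is correct and follows exactly the Lagrangian-plus-trapezoidal-rule argument that the paper delegates to its references \cite{EnLaWi18,RanVi2013}: the identity $f(1)-f(0)=\tfrac12(f'(0)+f'(1))+\tfrac12\int_0^1 f'''(s)s(s-1)\,ds$, the vanishing of $f'(1)$ via the primal and adjoint equations, and the chain-rule bookkeeping producing the factor $3$ in front of $\mathcal{A}''(\cdot)(e,e,e^{*})$ all check out. Note that your derivation yields $+\rho(\tilde{u})(\tilde{z})$, the argument $e^{*}$ (not $e$) in the last slot of the $\mathcal{A}''$ term, and $J'(\tilde{u})$ in the definition of $\rho^{*}$, which is consistent with (\ref{FullErrorEstimator}) and Corollary~\ref{Corrollary: discrete Error Representation}; the deviating sign and arguments printed in the theorem statement itself are typos.
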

		\begin{proof}
			We refer the reader to \cite{EnLaWi18} and \cite{RanVi2013} for the details of the proof.
		\end{proof}
Since Theorem \ref{Theorem: Error Representation} is valid for arbitrary $\tilde{z}$ and $\tilde{u}$,
it also holds for the approximations $u_h$ and $z_h$, even if they are not computed exactly.
Thus, the full error estimator reads as 
		\begin{equation} \label{FullErrorEstimator}
		\eta=\frac{1}{2}\rho(\tilde{u})(z-\tilde{z})+\frac{1}{2}\rho^*(\tilde{u},\tilde{z})(u-\tilde{u}) 
		+\rho (\tilde{u})(\tilde{z}) + \mathcal{R}^{(3)}.
		\end{equation} 
This error estimator is exact, however, not computable.		
To obtain a computable error estimator,  we replace $u$ 
by an approximation on enriched finite dimensional spaces $U_h^{(2)}$ and
$V_h^{(2)}$, 
which, for example, was also 
done in  
\cite{GilesSuli2002,BaRa03,BruSchweBau18, RiWi15_dwr, EnWi17, EnLaWi18, EnLaWiPAMM18}.
In our numerical examples
	presented in  Section \ref{Section: Numerical examples},
we use 
	bi-quadratic (2D) finite elements to define the
 enriched spaces  $U_h^{(2)}$ and $V_h^{(2)}$. 
 As 
 in \cite{EnWi17}, spaces with polynomial orders $r>2$ can be adopted as well.
\begin{remark}
Using enriched spaces is expensive. For this reason, already in the early
studies, e.g., \cite{BeRa01,BaRa03,BraackErn02} (patch-wise) interpolations were suggested to approximate ${z}$
and ${u}$. 
\end{remark}

\newpage
%%%%%%%%%%%%%%%%%%%%%%%%%%%%%%%%%%%%%%%%%%%%%%%%%%%%%%%%%%%%%%%%%%%%%%%%%%%%%%%%%%%%%
\section{Efficiency and reliability results for the DWR estimator}
\label{Section: Efficiency and reliability results for the DWR estimator in enriched spaces}
In this key section, we show efficiency and reliability of a computable DWR estimator 
%\langer{
	in enriched spaces
%	}
under a saturation assumption for the goal functional. 
As mentioned in the introduction, this is a widely adopted assumption in
hierarchical based error
% \langer{
 	 estimates; see, e.g.,
% 	 } 
%as for instance presented in
\cite{BankWeiser1985,BoErKor1996,BankSmith1993,Verfuerth:1996a}.  
We are not aware of literature satisfying this assumption for general
nonlinear problems and goal functionals.
Furthermore, there might be restrictions to satisfy this condition.  
For error estimates in the energy norm, an analysis regarding this assumption
%for norms 
can be found in
\cite{DoerflerNochetto2002,Agouzal2002,AchAchAgou2004,FerrazOrtnerPraetorius2010,BankParsaniaSauter2013,CaGaGed16,ErathGanterPraetorius2018} for
linear elliptic boundary value problems 
depending on the oscillation of the data.
Finally, we employ higher-order corrections of the error estimator.
%, where
Similar ideas correcting the functional value were discussed in 
\cite{GilesSuli2002,Giles2008inpro,ShaOll2018}.
	Such techniques have also been used
to derive an upper bound of the error without using the saturation assumption 
in \cite{NochettoVeeserVerani2009,AinRan2012, LadPleCha2013}. 
Lower and upper bounds were established 
for symmetric linear elliptic boundary value problems in \cite{PruOdWeBaBo03},
and 
for monotone and semi-linear problems for point-wise error estimates  in \cite{NoSchmiSiebVeeser2006}.

\subsection{Preliminary results}	
We now first recall some notation and known statements.
Let $u_h^{(2)} \in U_h^{(2)}$ be the exact solution of the discretized primal problem
$\mathcal{A}(u_h^{(2)})=0$ in $(V_h^{(2)})^* $,
and $z_h^{(2)} \in V_h^{(2)}$ the exact solution of the discretized adjoint problem	
$(\mathcal{A'}(u_h^{(2)}))^*(z_h^{(2)}) =J'(u_h^{(2)})$ in $(U_h^{(2)})^*.$

\begin{corollary}\label{Corrollary: discrete Error Representation}	
Let the assumptions of Theorem \ref{Theorem: Error Representation} be fulfilled. 		
Then the error representation
		\begin{align} \label{discrete Error Representation}
			\begin{split}
				J(u_h^{(2)})-J(\tilde{u})&= \frac{1}{2}\rho(\tilde{u})(z_h^{(2)}-\tilde{z})+\frac{1}{2}\rho^*(\tilde{u},\tilde{z})(u_h^{(2)}-\tilde{u}) 
				-\rho (\tilde{u})(\tilde{z}) + \mathcal{R}^{(3)(2)} \nonumber
			\end{split}
		\end{align}
holds for arbitrary but fixed  $\tilde{u} \in U_h^{(2)}$ and $ \tilde{z} \in V_h^{(2)}$,
where
    $\rho(\tilde{u})(\cdot) := -\mathcal{A}(\tilde{u})(\cdot)$,
    $\rho^*(\tilde{u},\tilde{z})(\cdot) := J'(\tilde{u})-\mathcal{A}'(\tilde{u})(\cdot,\tilde{z})$,
    and 
    $\mathcal{R}^{(3)(2)}:=\frac{1}{2}\int_{0}^{1}[J'''(\tilde{u}+se^{(2)})(e^{(2)},e^{(2)},e^{(2)})
				-\mathcal{A}'''(\tilde{u}+se^{(2)})(e^{(2)},e^{(2)},e^{(2)},\tilde{z}+se^{(2),*})
				-3\mathcal{A}''(\tilde{u}+se^{(2)})(e^{(2)},e^{(2)},e^{(2),*})]s(s-1)\,ds$
    denotes the remainder term,
    with $e^{(2)}=u_h^{(2)}-\tilde{u}$ and $e^{(2),*} =z_h^{(2)}-\tilde{z}$.

\begin{proof}
		The statement  follows immediately from Theorem \ref{Theorem: Error Representation}.
	\end{proof}
\end{corollary}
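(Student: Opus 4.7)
The plan is to invoke Theorem 2.1 in a purely restricted setting by replacing the ambient Banach spaces $(U,V)$ with the enriched finite-dimensional subspaces $(U_h^{(2)},V_h^{(2)})$. Since $U_h^{(2)}\subset U$ and $V_h^{(2)}\subset V$ are themselves Banach spaces (being finite-dimensional), and since $\mathcal{A}$ and $J$ inherit $\mathcal{C}^3$ regularity on these subspaces, the hypotheses of Theorem 2.1 are met in this smaller setting.

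First I would verify that the objects in the corollary play exactly the role of $u$, $z$, $\tilde u$, $\tilde z$ from Theorem 2.1 relative to the restricted spaces. By definition, $u_h^{(2)}$ solves $\mathcal{A}(u_h^{(2)})=0$ in $(V_h^{(2)})^*$, i.e.\ the residual vanishes when tested against any $\varphi\in V_h^{(2)}$; likewise $(\mathcal{A}'(u_h^{(2)}))^*(z_h^{(2)})=J'(u_h^{(2)})$ in $(U_h^{(2)})^*$, i.e.\ the adjoint equation holds when tested against any $\psi\in U_h^{(2)}$. Because the test elements $\tilde u,\tilde z$ in the corollary are taken from $U_h^{(2)}$ and $V_h^{(2)}$ respectively, these restricted equations suffice exactly where the proof of Theorem 2.1 needs them.

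Next I would apply Theorem 2.1 verbatim with the dictionary $U\leadsto U_h^{(2)}$, $V\leadsto V_h^{(2)}$, $u\leadsto u_h^{(2)}$, $z\leadsto z_h^{(2)}$. The resulting identity is precisely the claimed representation: the primal residual term $\tfrac12\rho(\tilde u)(z_h^{(2)}-\tilde z)$, the adjoint residual term $\tfrac12\rho^*(\tilde u,\tilde z)(u_h^{(2)}-\tilde u)$, the correction term $-\rho(\tilde u)(\tilde z)$, and the remainder $\mathcal{R}^{(3)(2)}$ obtained by substituting the discrete errors $e^{(2)}=u_h^{(2)}-\tilde u$ and $e^{(2),*}=z_h^{(2)}-\tilde z$ into the integral formula \eqref{Error Estimator: Remainderterm}.

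The only point requiring a brief sanity check is that every step of the proof of Theorem 2.1 referenced from \cite{EnLaWi18,RanVi2013} is a local algebraic/Taylor-expansion argument that never uses density of $U,V$ or any property exclusive to the full spaces; it only uses the pairing of the (adjoint) equation against the chosen test elements. Since the chosen $\tilde u,\tilde z$ live in the enriched subspaces where the discrete equations hold, there is no genuine obstacle, and the corollary reduces to a direct substitution. This is why the authors declare that the statement follows immediately.
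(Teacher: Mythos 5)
Your proposal is correct and is precisely the argument the paper has in mind: the paper's entire proof is the single sentence that the statement ``follows immediately from Theorem~\ref{Theorem: Error Representation}'', and your explicit justification---replacing the ambient spaces by $U_h^{(2)}$ and $V_h^{(2)}$, on which the discrete primal and adjoint equations supply exactly the vanishing residuals that the theorem's Taylor-expansion proof requires, with $\tilde u,\tilde z$ and the errors $e^{(2)},e^{(2),*}$ all lying in these subspaces---is the implicit content of that sentence. There is no gap; the approach is the same, merely spelled out.
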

	\begin{remark}
		For a linear problem and a functional fulfilling $J'''=0$ this theorem allows us to compute $J(u_h^{(2)})$ without the computation of $ u_h^{(2)}$ since $\rho(\tilde{u})(z_h^{(2)}-\tilde{z})=\rho^*(\tilde{u},\tilde{z})(u_h^{(2)}-\tilde{u}) $ for linear problems as already stated in \cite{GilesSuli2002}.
	\end{remark}

Replace $u$ and $z$ by the approximations $u_h^{(2)}$ and $z_h^{(2)}$ in (\ref{FullErrorEstimator}), we get the computable error estimator
\begin{equation} \label{Definition: Error Estimator}
		\eta^{(2)}:=\frac{1}{2}\rho(\tilde{u})(z_h^{(2)}-\tilde{z})+\frac{1}{2}\rho^*(\tilde{u},\tilde{z})(u_h^{(2)}-\tilde{u}) 
		+\rho (\tilde{u})(\tilde{z}) + \mathcal{R}^{(3)(2)}.
	\end{equation}	
Now, Corollary~\ref{Corrollary: discrete Error Representation} together with
(\ref{Definition: Error Estimator}) allows us to  recover the error 
$J(u_h^{(2)})-J(\tilde{u})$.
A similar representation of the error   $J(u_h^{(2)})-J(\tilde{u})$ is derived in \cite{GilesSuli2002,Giles2008inpro,GilPie2004}.

\subsection{Efficiency and reliability of the DWR estimator using a saturation assumption}
The following lemma provides a two-side estimate of the modulus of 
$\eta^{(2)}$ defined by (\ref{Definition: Error Estimator}).

\begin{lemma}\label{Lemma:  Error Estimatorboundsremainder}
    Under the assumptions of Theorem \ref{Theorem: Error Representation}, 
    the two-side estimate
		\begin{equation}
			| J(u)-J(\tilde{u}) |-|J(u)-J(u_h^{(2)})| \leq |\eta^{(2)}| \leq | J(u)-J(\tilde{u}) |+ |J(u)-J(u_h^{(2)})|. \nonumber
		\end{equation}
    holds for the computable error estimator $\eta^{(2)}$.

\begin{proof}
From
			%\begin{align*}
			$	|\eta| = |\eta^{(2)} - (\eta ^{(2)}- \eta)| $,
			%\end{align*}
			we can deduce that 
			\begin{align*}
				|\eta| - |\eta - \eta^{(2)}| \leq |\eta^{(2)}| \leq 	|\eta|+|\eta - \eta^{(2)}|.
			\end{align*}
Since $U_h^{(2)}$ is an enriched space, we have $U_h \subset U_h^{(2)} \subset U$. 
It follows that  $\eta-\eta^{(2)}=J(u)-J(\tilde{u})-J(u_h^{(2)})+J(\tilde{u}) = J(u)-J(u_h^{(2)})$, 
which leads us together with $\eta= J(u)-J(\tilde{u})$ to the estimates stated in the lemma.
%our statement.
\end{proof}
%}
\end{lemma}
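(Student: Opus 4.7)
The plan is to reduce the claim to two applications of the reverse triangle inequality, where the ``reference'' quantity is the exact (non-computable) error estimator $\eta$ from \eqref{FullErrorEstimator}. The starting identity is simply
\begin{equation*}
|\eta^{(2)}| = \bigl|\eta - (\eta - \eta^{(2)})\bigr|,
\end{equation*}
from which the triangle inequality (in its ordinary and reverse forms) immediately gives
\begin{equation*}
|\eta| - |\eta - \eta^{(2)}| \;\leq\; |\eta^{(2)}| \;\leq\; |\eta| + |\eta - \eta^{(2)}|.
\end{equation*}
So the only work left is to identify $|\eta|$ and $|\eta - \eta^{(2)}|$ with the two functional errors appearing in the lemma.

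Next I would invoke Theorem~\ref{Theorem: Error Representation} to read $\eta = J(u) - J(\tilde{u})$: this works because $\tilde{u}\in U_h \subset U$ and $\tilde{z}\in V_h \subset V$ are admissible in that representation, and the remainder term $\mathcal{R}^{(3)}$ is precisely the one built into $\eta$ in \eqref{FullErrorEstimator}. Analogously, Corollary~\ref{Corrollary: discrete Error Representation} gives $\eta^{(2)} = J(u_h^{(2)}) - J(\tilde{u})$, where admissibility of $\tilde{u}$ in that corollary (it requires $\tilde{u}\in U_h^{(2)}$ and $\tilde{z}\in V_h^{(2)}$) is guaranteed by the hierarchy $U_h \subset U_h^{(2)}$ and $V_h \subset V_h^{(2)}$ of the enriched spaces. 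Subtracting,
\begin{equation*}
\eta - \eta^{(2)} = \bigl(J(u) - J(\tilde{u})\bigr) - \bigl(J(u_h^{(2)}) - J(\tilde{u})\bigr) = J(u) - J(u_h^{(2)}),
\end{equation*}
so the cancellation is exact; no handling of the two different remainder terms $\mathcal{R}^{(3)}$ and $\mathcal{R}^{(3)(2)}$ is needed beyond keeping track of them inside the respective $\eta$ and $\eta^{(2)}$.

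Combining the two ingredients finishes the proof. I expect no serious technical obstacle: the statement is essentially a bookkeeping identity plus the triangle inequality. The one point that deserves care — and which I would flag in the proof — is verifying that the very same $\tilde{u},\tilde{z}$ can be used in both Theorem~\ref{Theorem: Error Representation} and Corollary~\ref{Corrollary: discrete Error Representation}, so that the $\tilde{u}$-dependent terms cancel cleanly in $\eta - \eta^{(2)}$. This is precisely where the inclusion $U_h \subset U_h^{(2)} \subset U$ (and its $V$-analogue) enters, exactly as highlighted in the lemma's context.
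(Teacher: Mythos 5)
Your proposal is correct and follows essentially the same route as the paper: the same triangle-inequality sandwich around $|\eta^{(2)}|$, followed by the identifications $\eta = J(u)-J(\tilde{u})$ and $\eta^{(2)} = J(u_h^{(2)})-J(\tilde{u})$ via Theorem~\ref{Theorem: Error Representation} and Corollary~\ref{Corrollary: discrete Error Representation}, with the inclusion $U_h \subset U_h^{(2)} \subset U$ justifying that the same $\tilde{u},\tilde{z}$ are admissible in both. The only (immaterial) difference is which of the two equivalent starting identities you manipulate first.
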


\begin{assumption}[Saturation assumption for the goal functional] \label{Assumption: Better approximation}
Let $u_h^{(2)}$ solve the primal problem on $U_h^{(2)}$ and let $\tilde{u}$ be some approximation. 
Then we assume that 
%\twick{
\[
|J(u)-J(u_h^{(2)})| < b_h| J(u)-J(\tilde{u}) |
\]
%}
for some  $b_h<b_0$ and some fixed $b_0 \in (0,1)$.
\end{assumption}
	
\begin{theorem} \label{Theorem: Efficiency and Reliability with remainder}
Let the saturation Assumption~\ref{Assumption: Better approximation} be fulfilled. 
Then the  computable error estimator $\eta^{(2)}$ satisfies the efficiency and reliability estimates
\begin{equation}
\label {Estimate: hEffektivity+Remainder}
 \underline{c}_h|\eta^{(2)}| \leq | J(u)-J(\tilde{u}) | \leq 	\overline{c}_h|\eta^{(2)}|
 \quad \mbox{and} \quad
 \underline{c}|\eta^{(2)}| \leq | J(u)-J(\tilde{u}) | \leq 	\overline{c}|\eta^{(2)}|, 
\end{equation}
with the positive constants $\underline{c}_h:= 1/(1+b_h)$, $\overline{c}_h:=1/( 1-b_h)$, $\underline{c}:= 1/(1+b_0)$,
and $\overline{c}:=1/( 1-b_0)$. 
%
%}		
\begin{proof}
			In the proof of Lemma \ref{Lemma:  Error Estimatorboundsremainder}, we concluded that 	$|\eta| - |\eta - \eta^{(2)}| \leq |\eta^{(2)}| \leq 	|\eta|+|\eta - \eta^{(2)}|$ which is equivalent to the statement that 
			$	|\eta^{(2)}| - |\eta^{(2)} - \eta| \leq |\eta| \leq 	|\eta^{(2)}|+|\eta^{(2)} - \eta|$. 
			Therefore, we have 
			\begin{align*}
				|\eta^{(2)}| - |J(u)-J(u_h^{(2)})| \leq &| J(u)-J(\tilde{u}) | \leq 	|\eta^{(2)}|+|J(u)-J(u_h^{(2)})|,\nonumber 
			\end{align*}		
			which together with Assumption~\ref{Assumption: Better approximation} immediately 
			yield the first inequalities in (\ref{Estimate: hEffektivity+Remainder}).
			The second statement follows from $\underline{c} \leq \underline{c}_h$  
			and  $\overline{c}_h \leq \overline{c}$  due to $b_h < b_0$.
\end{proof}
\end{theorem}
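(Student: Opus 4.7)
The plan is to use Lemma~\ref{Lemma:  Error Estimatorboundsremainder} as the main workhorse, since it already does the essential work of relating $|\eta^{(2)}|$ to both $|J(u)-J(\tilde u)|$ and the higher-order quantity $|J(u)-J(u_h^{(2)})|$. What remains is simply to absorb the latter quantity on the right-hand side using the saturation assumption, and then to pass from the $b_h$-constants to the $b_0$-constants by monotonicity. There is no serious obstacle; the whole argument is a rearrangement of triangle inequalities combined with the one non-trivial input, the saturation hypothesis.

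First I would reformulate the conclusion of Lemma~\ref{Lemma:  Error Estimatorboundsremainder} as a two-sided bound on the true functional error by the estimator. Starting from
\begin{equation*}
|J(u)-J(\tilde u)| - |J(u)-J(u_h^{(2)})| \;\leq\; |\eta^{(2)}| \;\leq\; |J(u)-J(\tilde u)| + |J(u)-J(u_h^{(2)})|,
\end{equation*}
and applying the triangle inequality in the other direction (equivalently, using that $||a|-|b||\leq |a-b|$ is symmetric in $a,b$), I obtain
\begin{equation*}
|\eta^{(2)}| - |J(u)-J(u_h^{(2)})| \;\leq\; |J(u)-J(\tilde u)| \;\leq\; |\eta^{(2)}| + |J(u)-J(u_h^{(2)})|.
\end{equation*}
This rearrangement is already indicated in the proof of the preceding lemma.

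Second, I would plug in Assumption~\ref{Assumption: Better approximation}, namely $|J(u)-J(u_h^{(2)})| < b_h |J(u)-J(\tilde u)|$, into both sides. In the upper bound this gives $|J(u)-J(\tilde u)| \leq |\eta^{(2)}| + b_h |J(u)-J(\tilde u)|$, which after moving the $b_h$-term to the left and dividing by $1-b_h>0$ yields $|J(u)-J(\tilde u)| \leq \overline{c}_h |\eta^{(2)}|$. In the lower bound, substituting into $|\eta^{(2)}| - b_h|J(u)-J(\tilde u)| \leq |J(u)-J(\tilde u)|$ and rearranging gives $|\eta^{(2)}| \leq (1+b_h)|J(u)-J(\tilde u)|$, i.e.\ $\underline{c}_h |\eta^{(2)}| \leq |J(u)-J(\tilde u)|$. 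This establishes the first pair of inequalities in~(\ref{Estimate: hEffektivity+Remainder}).

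Finally, the uniform inequalities with constants $\underline{c},\overline{c}$ follow immediately from $b_h < b_0$: the map $b \mapsto 1/(1+b)$ is decreasing and $b \mapsto 1/(1-b)$ is increasing on $[0,1)$, so $\underline{c} = 1/(1+b_0) \leq 1/(1+b_h) = \underline{c}_h$ and $\overline{c}_h = 1/(1-b_h) \leq 1/(1-b_0) = \overline{c}$, which extends the $h$-dependent bounds to uniform ones and completes the proof.
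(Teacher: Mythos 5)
Your proposal is correct and follows essentially the same route as the paper: both arguments rearrange the two-sided bound of Lemma~\ref{Lemma:  Error Estimatorboundsremainder} into $|\eta^{(2)}| - |J(u)-J(u_h^{(2)})| \leq |J(u)-J(\tilde u)| \leq |\eta^{(2)}| + |J(u)-J(u_h^{(2)})|$, absorb $|J(u)-J(u_h^{(2)})|$ via the saturation assumption, and pass to the uniform constants by monotonicity in $b$. Your version merely spells out the elementary algebra (the equivalence $A-B\leq C\leq A+B \Leftrightarrow C-B\leq A\leq C+B$ and the division by $1\pm b_h$) that the paper leaves implicit.
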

	\begin{remark}
%		\bernhard{ 
			The left estimate in  (\ref{Estimate: hEffektivity+Remainder}) also holds for $b_0 \in (0,1]$, which is  called weak saturation assumption  in the case of energy norm estimates; see \cite{CaGaGed16}.
%			}
	\end{remark}
		
Now let us assume that we neglect the remainder term $\mathcal{R}^{(3)(2)}$ and iteration error estimator $\rho(\tilde{u})(\tilde{z})$ in the error estimator $\eta^{(2)}$. 
	This gives the practical error estimator 
	\begin{align} \label{Error Estimator: practical discretization wo Remainder}
		\eta_h^{(2)}:=\frac{1}{2}\rho(\tilde{u})(z_h^{(2)}-\tilde{z})+\frac{1}{2}\rho^*(\tilde{u},\tilde{z})(u_h^{(2)}-\tilde{u}) ,
	\end{align}
	where the corresponding theoretical error estimator is given by 
	\begin{align} \label{Error Estimator: theoretical discretization wo Remainder}
		\eta_h:=\frac{1}{2}\rho(\tilde{u})(z-\tilde{z})+\frac{1}{2}\rho^*(\tilde{u},\tilde{z})(u-\tilde{u}) .
	\end{align}
	Variants of these error estimators are discussed, e.g.,  in \cite{BeRa01,RanVi2013,RiWi15_dwr}; also see the references therein. 
	
\begin{lemma} \label{Lemma: Difference Error Estimators}
		Let $\eta_h$ be defined as in (\ref{Error Estimator: theoretical discretization wo Remainder}),
		and $\eta_h^{(2)}$ be defined as in (\ref{Error Estimator: practical discretization wo Remainder}). 
		Furthermore, let us assume that the assumptions of Theorem \ref{Theorem: Error Representation} are fulfilled . 
%		\langer{
		Then, for the exact solutions $u_h^{(2)}$ and $z_h^{(2)}$ from the spaces $U_h^{(2)}$ and $V_h^{(2)}$,
		the following two-side estimates
%		}
		\begin{equation}\label{Difference Error Estimators1}
			|J(u)-J(u_h^{(2)})|- |\mathcal{R}^{(3)} -\mathcal{R}^{(3)(2)} |\leq |\eta_h -\eta_h^{(2)}| \leq |J(u)-J(u_h^{(2)})|+ |\mathcal{R}^{(3)} -\mathcal{R}^{(3)(2)} |,
		\end{equation} 
		and 
		\begin{equation}\label{Difference Error Estimators2}
			|J(u)-J(\tilde{u})|- |\rho (\tilde{u})(\tilde{z})|-|\mathcal{R}^{(3)}| \leq |\eta_h| \leq |J(u)-J(\tilde{u})|+|\rho (\tilde{u})(\tilde{z})|+|\mathcal{R}^{(3)}|,
		\end{equation} 
		hold,
		with $\mathcal{R}^{(3)}$ defined in (\ref{Error Estimator: Remainderterm}) 
		and $\mathcal{R}^{(3)(2)}$ from Corollary~\ref{Corrollary: discrete Error Representation}.
\end{lemma}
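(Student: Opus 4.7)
The plan is to read both bounds as straightforward consequences of the two error representations already established (Theorem 2.1 and Corollary 3.1), combined with the (reverse) triangle inequality. The key observation is that the two estimators $\eta_h$ and $\eta_h^{(2)}$ are precisely the ``first two terms'' appearing in those representations, so each representation identifies a difference of functional values as the corresponding estimator plus the known correction terms $\rho(\tilde u)(\tilde z)$ and the remainder $\mathcal{R}^{(3)}$ (respectively $\mathcal{R}^{(3)(2)}$).

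First I would invoke Theorem 2.1 to write
\[
J(u)-J(\tilde u)=\eta_h-\rho(\tilde u)(\tilde z)+\mathcal{R}^{(3)},
\]
which, by rearranging and applying the triangle and reverse triangle inequalities to $\eta_h=[J(u)-J(\tilde u)]+\rho(\tilde u)(\tilde z)-\mathcal{R}^{(3)}$, immediately yields the second chain of inequalities (\ref{Difference Error Estimators2}).

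Next, I would apply Corollary 3.1 with the same $\tilde u,\tilde z$ (both in the enriched subspaces, as required there) to obtain the companion identity
\[
J(u_h^{(2)})-J(\tilde u)=\eta_h^{(2)}-\rho(\tilde u)(\tilde z)+\mathcal{R}^{(3)(2)}.
\]
Subtracting this from the identity of the previous paragraph cancels the $-\rho(\tilde u)(\tilde z)$ terms and gives
\[
J(u)-J(u_h^{(2)})=\bigl(\eta_h-\eta_h^{(2)}\bigr)+\bigl(\mathcal{R}^{(3)}-\mathcal{R}^{(3)(2)}\bigr),
\]
so that $\eta_h-\eta_h^{(2)}=[J(u)-J(u_h^{(2)})]-[\mathcal{R}^{(3)}-\mathcal{R}^{(3)(2)}]$; another application of the triangle/reverse triangle inequality then produces (\ref{Difference Error Estimators1}).

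There is essentially no obstacle: the only bookkeeping point is to verify that $\tilde u,\tilde z$ are admissible arguments in both Theorem 2.1 and Corollary 3.1 simultaneously, which is the case since $U_h^{(2)}\subset U$ and $V_h^{(2)}\subset V$, so any $\tilde u\in U_h^{(2)}$, $\tilde z\in V_h^{(2)}$ chosen for Corollary 3.1 also qualifies in Theorem 2.1. Once this is noted, the two bounds follow by the same two-line argument with the triangle inequality.
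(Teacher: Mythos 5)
Your proposal is correct and follows essentially the same route as the paper: both proofs subtract the error representation of Corollary~\ref{Corrollary: discrete Error Representation} from that of Theorem~\ref{Theorem: Error Representation} to obtain the identity $J(u)-J(u_h^{(2)})=\bigl(\eta_h-\eta_h^{(2)}\bigr)+\bigl(\mathcal{R}^{(3)}-\mathcal{R}^{(3)(2)}\bigr)$ (the iteration-error term $\rho(\tilde u)(\tilde z)$ cancelling), and then apply the triangle and reverse triangle inequalities, with (\ref{Difference Error Estimators2}) read off directly from Theorem~\ref{Theorem: Error Representation}. The only cosmetic difference is the sign you carry on $\rho(\tilde u)(\tilde z)$, which is immaterial since it cancels in the first bound and appears only in absolute value in the second.
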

\begin{proof}
			From Theorem \ref{Theorem: Error Representation}, we know that 
			\begin{equation*}
				J(u)-J(\tilde{u})= \underbrace{\frac{1}{2}\rho(\tilde{u})(z-\tilde{z})+\frac{1}{2}\rho^*(\tilde{u},\tilde{z})(u-\tilde{u}) }_{\eta_h}
				+\rho (\tilde{u})(\tilde{z}) + \mathcal{R}^{(3)},
			\end{equation*}
			and Corollary~\ref{Corrollary: discrete Error Representation} provides us with the identity
			\begin{equation*}
				J(u_h^{(2)})-J(\tilde{u})=\underbrace{ \frac{1}{2}\rho(\tilde{u})(z_h^{(2)}-\tilde{z})+\frac{1}{2}\rho^*(\tilde{u},\tilde{z})(u_h^{(2)}-\tilde{u}) }_{\eta_h^{(2)}}
				+\rho (\tilde{u})(\tilde{z}) + \mathcal{R}^{(3)(2)}.
			\end{equation*}							
			These two identities imply the identity		
				$J(u)-J(u_h^{(2)}) =  \eta_h - \eta_h^{(2)} + \mathcal{R}^{(3)}-\mathcal{R}^{(3)(2)}$.		
			We now conclude that 
			%\begin{align*}
				$|J(u)-J(u_h^{(2)})-\mathcal{R}^{(3)}+\mathcal{R}^{(3)(2)}| = | \eta_h - \eta_h^{(2)}|$,
			%\end{align*}
			from which we immediately get the inequalities (\ref{Difference Error Estimators1}).
			The second statement follows directly from Theorem \ref{Theorem: Error Representation}.
\end{proof}

\begin{lemma}\label{Lemma: Bounds for Error Estimator}
		Under the conditions of Lemma \ref{Lemma: Difference Error Estimators},
		%it holds:
		inequalities
		\begin{align}
		\label{Bounds for Error Estimator}
			|\eta_h^{(2)}| -\gamma(\mathcal{A},J,u_h^{(2)},u,\tilde{u}) \leq |J(u)-J(\tilde{u})| \leq |\eta_h^{(2)}| +\gamma(\mathcal{A},J,u_h^{(2)},u,\tilde{u})
		\end{align}
		are valid, where 
\begin{equation}
\label{twick_Nov_13_2018_eq_1}
\gamma(\mathcal{A},J,u_h^{(2)},u,\tilde{u}): =|J(u)-J(u_h^{(2)})|+ |\mathcal{R}^{(3)} -\mathcal{R}^{(3)(2)} |+|\rho (\tilde{u})(\tilde{z})|+|\mathcal{R}^{(3)}|.
\end{equation}
\end{lemma}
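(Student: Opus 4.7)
The plan is to combine the two two-sided inequalities in Lemma \ref{Lemma: Difference Error Estimators} via the reverse triangle inequality $\bigl||a|-|b|\bigr|\le |a-b|$ applied twice.

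First I would rewrite the bound (\ref{Difference Error Estimators2}) as $\bigl||J(u)-J(\tilde u)|-|\eta_h|\bigr|\le |\rho(\tilde u)(\tilde z)|+|\mathcal{R}^{(3)}|$, and the bound (\ref{Difference Error Estimators1}) as $\bigl||\eta_h|-|\eta_h^{(2)}|\bigr|\le |\eta_h-\eta_h^{(2)}|\le |J(u)-J(u_h^{(2)})|+|\mathcal{R}^{(3)}-\mathcal{R}^{(3)(2)}|$. Each of these follows directly from the reverse triangle inequality combined with the corresponding conclusion of Lemma \ref{Lemma: Difference Error Estimators}, so no new identities need to be established.

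Next I would insert the intermediate quantity $|\eta_h|$ between $|J(u)-J(\tilde u)|$ and $|\eta_h^{(2)}|$ and apply the ordinary triangle inequality:
\begin{equation*}
\bigl||J(u)-J(\tilde u)|-|\eta_h^{(2)}|\bigr|
\;\le\;
\bigl||J(u)-J(\tilde u)|-|\eta_h|\bigr|+\bigl||\eta_h|-|\eta_h^{(2)}|\bigr|.
\end{equation*}
Substituting the two estimates from the previous step into the right-hand side yields exactly $\gamma(\mathcal{A},J,u_h^{(2)},u,\tilde u)$ as defined in (\ref{twick_Nov_13_2018_eq_1}). Unfolding the absolute value on the left then produces the claimed two-sided bound (\ref{Bounds for Error Estimator}).

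There is no real obstacle here; the lemma is essentially a bookkeeping consequence of Lemma \ref{Lemma: Difference Error Estimators}. The only point that deserves a brief word is the order in which the triangle inequalities are chained, since the two error estimators $\eta_h$ and $\eta_h^{(2)}$ differ both by a functional-value gap $J(u)-J(u_h^{(2)})$ and by a remainder difference $\mathcal{R}^{(3)}-\mathcal{R}^{(3)(2)}$, while the theoretical estimator $\eta_h$ itself already differs from the true goal error by $\rho(\tilde u)(\tilde z)+\mathcal{R}^{(3)}$; summing these four contributions through the intermediate quantity $|\eta_h|$ is precisely what produces $\gamma$.
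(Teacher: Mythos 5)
Your proof is correct and follows essentially the same route as the paper: both arguments chain the two estimates of Lemma \ref{Lemma: Difference Error Estimators} through the intermediate quantity $|\eta_h|$ using the (reverse) triangle inequality, the only difference being that you phrase the bounds in the compact form $\bigl||a|-|b|\bigr|\le|a-b|$ while the paper writes out the corresponding two-sided inequalities. No gaps.
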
		
\begin{proof}
			Inequalities (\ref{Bounds for Error Estimator})  immediately follow from (\ref{Difference Error Estimators1}), (\ref{Difference Error Estimators2}) and 
			\begin{equation}
				|\eta_h|-|\eta_h -\eta_h^{(2)}| \leq |\eta_h^{(2)}| \leq |\eta_h|+|\eta_h -\eta_h^{(2)}|.\nonumber 
			\end{equation}
\end{proof}

 \subsection{Practicable error estimator under a strengthened saturation assumption}

We refine our previous 
%findings 
analysis
in order to derive a similar statement for the
%computable, 
practicable error estimator $\eta_h^{(2)}$. 
%We suppose:
%\langer{
We suppose the following strengthened saturation assumption:
%}
%
\begin{assumption}[Strengthened saturation assumption for the goal functional]  \label{Assumption: With Remainder}
		Let $u_h^{(2)}$ solve the primal problem on $U_h^{(2)}$, 
		and let $\tilde{u}$ be some approximation. 
Then we assume that the inequality
\[
\gamma(\mathcal{A},J,u_h^{(2)},u,\tilde{u}) < b_{h, \gamma}|
J(u)-J(\tilde{u}) |
\]
%\twick{
with $\gamma(\cdot)$ defined in (\ref{twick_Nov_13_2018_eq_1}),
holds true
%}
for some $b_{h, \gamma}<b_{0, \gamma}$ with some fixed $b_{0, \gamma} \in (0,1)$.
\end{assumption}
\begin{remark}
		Of course, Assumption~\ref{Assumption: With Remainder} implies Assumption~\ref{Assumption: Better approximation}. 
		If, on the other hand,  Assumption~\ref{Assumption: Better approximation} holds, then  Assumption~\ref{Assumption: With Remainder} is fulfilled up to higher-order terms  ($ |\mathcal{R}^{(3)} -\mathcal{R}^{(3)(2)} |$, $|\mathcal{R}^{(3)}|$), 
		and the part $|\rho (\tilde{u})(\tilde{z})|$, which can be controlled by the accuracy of the nonlinear solver.
	\end{remark}

\begin{theorem} \label{Theorem: Efficiency and Reliability without remainder}
Let the saturation Assumption~\ref{Assumption: With Remainder} be fulfilled. 
Then the  practical error estimator $\eta_h^{(2)}$ satisfies the efficiency and reliability estimates
\begin{equation}
\label {Estimate: hEffektivity-Remainder}
 \underline{c}_{h, \gamma}|\eta_h^{(2)}| \leq | J(u)-J(\tilde{u}) | \leq 	\overline{c}_{h, \gamma}|\eta_h^{(2)}|
 \quad \mbox{and} \quad
 \underline{c}_{\gamma}|\eta_h^{(2)}| \leq | J(u)-J(\tilde{u}) | \leq 	\overline{c}_{\gamma}|\eta_h^{(2)}| , 
\end{equation}
with the 
positive 
constants $\underline{c}_{h, \gamma}:= 1/(1+b_{h, \gamma})$, $\overline{c}_{h, \gamma}:=1/( 1-b_{h, \gamma})$, $\underline{c}_{\gamma}:= 1/(1+b_{0, \gamma})$, 
%and 
$\overline{c}_{\gamma}:=1/( 1-b_{0, \gamma})$. 
%
%}
%
\begin{proof}
From Lemma \ref{Lemma: Bounds for Error Estimator}, we concluded that 	$|\eta_h^{(2)}| -\gamma(\mathcal{A},J,u_h^{(2)},u,\tilde{u}) \leq |J(u)-J(\tilde{u})| \leq |\eta_h^{(2)}| +\gamma(\mathcal{A},J,u_h^{(2)},u,\tilde{u})$ which  together with Assumption~\ref{Assumption: With Remainder} imply 
that $$\frac{1}{1+b_{h, \gamma}}|\eta_h^{(2)}| \leq |J(u)-J(\tilde{u})| \leq \frac{1}{1-b_{h, \gamma}}|\eta_h^{(2)}|.$$ This is our first statement.			
				Like in the proof of Theorem \ref{Theorem: Efficiency and Reliability with remainder}, 
				the second statement follows from  $\underline{c}_{\gamma} \leq \underline{c}_{h,
					\gamma}$ and  $\overline{c}_{h,\gamma} \leq \overline{c}_\gamma$.
					We mention that $b_{h,\gamma}< b_{0,\gamma}$.
\end{proof}
	\end{theorem}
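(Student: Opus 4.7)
The plan is to mimic the proof of Theorem 3.2 (Efficiency and Reliability with remainder), replacing the role of Lemma 3.1 (which linked $|\eta - \eta^{(2)}|$ to $|J(u)-J(u_h^{(2)})|$) by Lemma 3.4, and the role of the original saturation Assumption 3.2 by the strengthened Assumption 3.3. Since all the real analytic work has already been done in establishing the error-representation identity of Theorem 2.1 and then extracting the triangle-type bound in Lemma 3.4, what remains is essentially a one-line algebraic manipulation, carried out twice (once for each inequality) and then promoted from $h$-dependent constants to uniform ones.

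First, I would quote Lemma 3.4 verbatim to obtain
\begin{equation*}
|\eta_h^{(2)}| - \gamma(\mathcal{A},J,u_h^{(2)},u,\tilde{u}) \;\leq\; |J(u)-J(\tilde{u})| \;\leq\; |\eta_h^{(2)}| + \gamma(\mathcal{A},J,u_h^{(2)},u,\tilde{u}).
\end{equation*}
Next, I would insert the strengthened saturation Assumption 3.3, $\gamma(\mathcal{A},J,u_h^{(2)},u,\tilde{u}) < b_{h,\gamma}\,|J(u)-J(\tilde{u})|$, into both sides. For the upper bound, rearranging $|J(u)-J(\tilde{u})| \leq |\eta_h^{(2)}| + b_{h,\gamma}|J(u)-J(\tilde{u})|$ yields $(1-b_{h,\gamma})|J(u)-J(\tilde{u})| \leq |\eta_h^{(2)}|$, i.e., $|J(u)-J(\tilde{u})| \leq \overline{c}_{h,\gamma}|\eta_h^{(2)}|$; here it is essential that $b_{h,\gamma}<b_{0,\gamma}<1$ so that dividing by $1-b_{h,\gamma}>0$ is legitimate. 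For the lower bound, $|\eta_h^{(2)}| - b_{h,\gamma}|J(u)-J(\tilde{u})| \leq |J(u)-J(\tilde{u})|$ rearranges to $\underline{c}_{h,\gamma}|\eta_h^{(2)}| \leq |J(u)-J(\tilde{u})|$. Together these give the first pair of inequalities in (3.5).

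To conclude, I would derive the second, uniform pair from the monotonicity of the constants: since $b_{h,\gamma}<b_{0,\gamma}$, one has $\underline{c}_\gamma = 1/(1+b_{0,\gamma}) \leq 1/(1+b_{h,\gamma}) = \underline{c}_{h,\gamma}$ and $\overline{c}_{h,\gamma} = 1/(1-b_{h,\gamma}) \leq 1/(1-b_{0,\gamma}) = \overline{c}_\gamma$, so the $h$-independent bounds follow by simple transitivity, exactly as in Theorem 3.2.

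There is no substantive obstacle here, since all the delicate ingredients — the cubic error representation with its remainder $\mathcal{R}^{(3)}$, the control of the iteration residual $\rho(\tilde{u})(\tilde{z})$, and the gap between the full and practical estimators — have been absorbed into the single quantity $\gamma(\mathcal{A},J,u_h^{(2)},u,\tilde{u})$. The only conceptual point worth flagging explicitly in the write-up is that Assumption 3.3 is genuinely stronger than Assumption 3.2: it is what allows one to discard $\mathcal{R}^{(3)}$, $\mathcal{R}^{(3)(2)}$ and $\rho(\tilde{u})(\tilde{z})$ in passing from $\eta^{(2)}$ to $\eta_h^{(2)}$ while retaining efficiency and reliability with the same functional form of constants.
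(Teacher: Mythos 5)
Your proposal is correct and follows exactly the paper's own route: quote Lemma \ref{Lemma: Bounds for Error Estimator}, insert Assumption \ref{Assumption: With Remainder} and rearrange to obtain the $h$-dependent constants, then pass to the uniform constants via $b_{h,\gamma}<b_{0,\gamma}$. You merely spell out the one-line rearrangement more explicitly than the paper does, which is harmless.
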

\begin{remark}
	The left estimate  in (\ref{Estimate: hEffektivity-Remainder}) is also true for $b_{0,\gamma} \in (0,1]$.
\end{remark}

\subsection{Bounds of the effectivity indices}
We finally derive bounds for the effectivity indices $I_{eff}$ and $I_{eff,\gamma}$ defined by the relations
\begin{equation*}
 I_{eff}:= \frac{|\eta^{(2)}|}{|J(u)-J(\tilde{u})|} 
\quad \mbox{and} \quad
 I_{eff,\gamma}:= \frac{|\eta_h^{(2)}|}{|J(u)-J(\tilde{u})|},
\end{equation*}
respectively.

\begin{theorem}[Bounds on the Effectivity Index] \label{Theorem: Ieffbounds}
			Let the assumptions of Theorem \ref{Theorem: Error Representation} be fulfilled.
			Then the following two statements are true:
			\begin{enumerate}
				\item If Assumption~\ref{Assumption: Better approximation} is fulfilled,  then $I_{eff} \in [1-b_0,1+b_0]$, and	
				if additionally $b_h \rightarrow 0$, then  $I_{eff} \rightarrow 1$.
				\item If Assumption~\ref{Assumption: With Remainder} is fulfilled,  
				then $I_{eff,\gamma} \in [1-b_{0,\gamma},1+b_{0,\gamma}]$, and  if additionally 
				$b_{h,\gamma} \rightarrow 0$, then  $I_{eff,\gamma} \rightarrow 1$.
			\end{enumerate}
\end{theorem}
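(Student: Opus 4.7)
The plan is to obtain both statements as direct consequences of the two-sided bounds already established in Lemma~\ref{Lemma:  Error Estimatorboundsremainder} and Lemma~\ref{Lemma: Bounds for Error Estimator}, combined with the respective saturation assumptions, by dividing through by $|J(u)-J(\tilde{u})|$ (which may be assumed positive, since otherwise Assumptions~\ref{Assumption: Better approximation} and \ref{Assumption: With Remainder} would force $I_{eff}$ and $I_{eff,\gamma}$ to be well-defined only in degenerate situations).

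For the first statement, I would start from the two-side estimate
\[
 |J(u)-J(\tilde{u})| - |J(u)-J(u_h^{(2)})| \leq |\eta^{(2)}| \leq |J(u)-J(\tilde{u})| + |J(u)-J(u_h^{(2)})|
\]
proved in Lemma~\ref{Lemma:  Error Estimatorboundsremainder}. Dividing by $|J(u)-J(\tilde{u})|$ yields
\[
 1 - \frac{|J(u)-J(u_h^{(2)})|}{|J(u)-J(\tilde{u})|} \leq I_{eff} \leq 1 + \frac{|J(u)-J(u_h^{(2)})|}{|J(u)-J(\tilde{u})|}.
\]
Assumption~\ref{Assumption: Better approximation} bounds the ratio on either side by $b_h$, and hence by $b_0$, so $I_{eff} \in [1-b_h,1+b_h] \subseteq [1-b_0,1+b_0]$. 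The convergence $I_{eff} \rightarrow 1$ whenever $b_h \rightarrow 0$ then follows by the squeeze principle from the tighter bracket $[1-b_h,1+b_h]$.

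The second statement is proved by exactly the same scheme, but starting from Lemma~\ref{Lemma: Bounds for Error Estimator}, which gives $|\eta_h^{(2)}| - \gamma(\mathcal{A},J,u_h^{(2)},u,\tilde{u}) \leq |J(u)-J(\tilde{u})| \leq |\eta_h^{(2)}| + \gamma(\mathcal{A},J,u_h^{(2)},u,\tilde{u})$. Rearranged, this reads $|J(u)-J(\tilde{u})| - \gamma \leq |\eta_h^{(2)}| \leq |J(u)-J(\tilde{u})| + \gamma$. Dividing by $|J(u)-J(\tilde{u})|$ and using Assumption~\ref{Assumption: With Remainder} to bound $\gamma/|J(u)-J(\tilde{u})| < b_{h,\gamma} < b_{0,\gamma}$ yields $I_{eff,\gamma} \in [1-b_{h,\gamma},1+b_{h,\gamma}] \subseteq [1-b_{0,\gamma}, 1+b_{0,\gamma}]$, with the limit $I_{eff,\gamma} \rightarrow 1$ as $b_{h,\gamma} \rightarrow 0$ coming again from the tighter bracket.

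There is no genuine obstacle here; the result is essentially a rewriting of the earlier lemmas in terms of the effectivity ratio. The only care needed is to note that the saturation assumptions implicitly require $|J(u)-J(\tilde{u})| > 0$ (otherwise the strict inequalities in Assumptions~\ref{Assumption: Better approximation} and \ref{Assumption: With Remainder} cannot hold unless both sides vanish), so that division by $|J(u)-J(\tilde{u})|$ is legitimate and the effectivity indices are well-defined. Everything else is the squeeze lemma applied to the hierarchy $b_h < b_0$ or $b_{h,\gamma} < b_{0,\gamma}$.
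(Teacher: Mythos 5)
Your argument is correct and is exactly the route the paper takes: its proof simply cites Lemma~\ref{Lemma:  Error Estimatorboundsremainder} together with Assumption~\ref{Assumption: Better approximation} for the first statement and Lemma~\ref{Lemma: Bounds for Error Estimator} together with Assumption~\ref{Assumption: With Remainder} for the second, and your division by $|J(u)-J(\tilde{u})|$ plus the squeeze argument is precisely the omitted bookkeeping. Your remark that the saturation assumptions implicitly force $|J(u)-J(\tilde{u})|>0$, so the effectivity indices are well-defined, is a small but worthwhile addition that the paper leaves unstated.
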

\begin{proof}
The first statement follows from Lemma \ref{Lemma:  Error Estimatorboundsremainder} and Assumption~\ref{Assumption: Better approximation}, whereas  the second statement is obtained from Lemma \ref{Lemma: Bounds for Error Estimator} and Assumption~\ref{Assumption: With Remainder} in the same way.
\end{proof}
%}
%
\begin{remark}
			We notice that $I_{eff,\gamma}  \rightarrow 1$ was also already observed in \cite{BaRa03}  and proven for smooth adjoint solutions in the linear case.
\end{remark}
		
\begin{propositon}
			If $J''' \equiv 0$ and if $A''$ is of the form $A''(u)\equiv B u +C$ for some linear operator $B$ and some $C$ not depending on $u$, then we have the representation
			\begin{equation}
				\mathcal{R}^{(3)}= \frac{1}{24}\left(3(B(u + \tilde{u}))(e,e,e^*) + (Be)(e,e,z+\tilde{z})\right) + \frac{1}{4} C(e,e,e^*). \nonumber
			\end{equation}
\end{propositon}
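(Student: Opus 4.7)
The plan is to start from the integral definition of $\mathcal{R}^{(3)}$ in Theorem~\ref{Theorem: Error Representation} and reduce it under the two hypotheses. The assumption $J'''\equiv 0$ kills the first term in the integrand immediately. The assumption $\mathcal{A}''(u)\equiv Bu+C$ says that $u\mapsto\mathcal{A}''(u)$ is affine in $u$ with linear part $B$, so its Fr\'echet derivative is $B$, giving the pointwise identity
$$\mathcal{A}'''(w)(v_1,v_2,v_3,\phi)=(Bv_1)(v_2,v_3,\phi),$$
which is independent of the base point $w$.

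Next, I would substitute $w=\tilde u+se$ and linearly expand in $s$:
$$\mathcal{A}'''(\tilde u+se)(e,e,e,\tilde z+se^*)=(Be)(e,e,\tilde z)+s\,(Be)(e,e,e^*),$$
$$\mathcal{A}''(\tilde u+se)(e,e,e^*)=(B\tilde u)(e,e,e^*)+s\,(Be)(e,e,e^*)+C(e,e,e^*),$$
so that the integrand becomes a polynomial of degree at most two in $s$ multiplied by the weight $s(s-1)$. Only the two elementary moments
$$\int_0^1 s(s-1)\,ds=-\tfrac{1}{6},\qquad \int_0^1 s^2(s-1)\,ds=-\tfrac{1}{12}$$
are needed. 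Collecting like terms yields the intermediate form
$$\mathcal{R}^{(3)}=\tfrac{1}{12}(Be)(e,e,\tilde z)+\tfrac{1}{6}(Be)(e,e,e^*)+\tfrac{1}{4}(B\tilde u)(e,e,e^*)+\tfrac{1}{4}C(e,e,e^*).$$

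Finally, to match the compact form stated in the proposition, I would re-expand using $u=\tilde u+e$ and $z=\tilde z+e^*$, so that
$$(B(u+\tilde u))(e,e,e^*)=2(B\tilde u)(e,e,e^*)+(Be)(e,e,e^*),$$
$$(Be)(e,e,z+\tilde z)=2(Be)(e,e,\tilde z)+(Be)(e,e,e^*),$$
and check that the claimed right-hand side reproduces the four collected coefficients $(\tfrac{1}{12},\tfrac{1}{6},\tfrac{1}{4},\tfrac{1}{4})$ exactly.

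The main obstacle is the multilinear-algebra bookkeeping: each of $B$, $C$, $\mathcal{A}''$ and $\mathcal{A}'''$ has several slots playing distinct roles (directions of differentiation in $U$ versus a test function in $V$), and one must be consistent about where $e^*$ rather than $e$ appears. In particular, the $\mathcal{A}''$-term in Theorem~\ref{Theorem: Error Representation} is printed with arguments $(e,e,e)$, whereas the analogous term in Corollary~\ref{Corrollary: discrete Error Representation} uses $(e^{(2)},e^{(2)},e^{(2),*})$; the computation closes, and in particular the summand $\tfrac{1}{4}C(e,e,e^*)$ emerges, only if one adopts the latter convention.
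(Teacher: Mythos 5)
Your computation is correct: I verified the moment integrals $\int_0^1 s(s-1)\,ds=-\tfrac16$ and $\int_0^1 s^2(s-1)\,ds=-\tfrac1{12}$, your intermediate form $\tfrac1{12}(Be)(e,e,\tilde z)+\tfrac16(Be)(e,e,e^*)+\tfrac14(B\tilde u)(e,e,e^*)+\tfrac14 C(e,e,e^*)$, and the re-expansion via $u=\tilde u+e$, $z=\tilde z+e^*$, which reproduces the stated coefficients exactly. The paper itself states this proposition without proof, so there is no argument to compare against; your direct reduction of the integral representation (\ref{Error Estimator: Remainderterm}) is the natural and, as far as I can see, the only reasonable route. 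Your closing remark is also the right call and worth making explicit: the last slot of $\mathcal{A}''(w)$ must be occupied by a test function in $V$, so the $(e,e,e)$ printed in Theorem~\ref{Theorem: Error Representation} is a typographical slip for $(e,e,e^*)$, consistent with Corollary~\ref{Corrollary: discrete Error Representation} and with the standard Becker--Rannacher remainder; the claimed identity (in particular the summand $\tfrac14 C(e,e,e^*)$) only emerges under that convention.
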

\begin{remark}
			In this section we did not consider the error contributions from the approximation of the data (source terms, boundary conditions) and  quadrature formulas.
\end{remark}

%%%%%%%%%%%%%%%%%%%%%%%%%%%%%%%%%%%%%%%%%%%%%%%%%%%%%%%%%%%%%%%%%%%%%%%%%%%%%%%%%%%%%%%%%%%%

\section{Localization and discussions of the error estimator parts} 
\label{Section: Localization and discussions on the error estimator parts}
		
In this section,  we further discuss the computable error estimator $\eta^{(2)}$ defined in (\ref{Definition: Error Estimator}).
%\langer{
We separate the error estimator $\eta^{(2)}$ into the following three parts $\eta_h^{(2)}$, $\eta_k$,  and
$\eta^{(2)}_\mathcal{R}$ as follows:
%}
%
		\begin{equation}
		\eta^{(2)}:=\underbrace{\frac{1}{2}\rho(\tilde{u})(z_h^{(2)}-\tilde{z})+\frac{1}{2}\rho^*(\tilde{u},\tilde{z})(u_h^{(2)}-\tilde{u}) }_{:= \eta_h^{(2)}}
		+\underbrace{\rho (\tilde{u})(\tilde{z})}_{:=\eta_k} + \underbrace{\mathcal{R}^{(3)(2)}}_{:=\eta^{(2)}_\mathcal{R}}. \nonumber
		\end{equation}

\paragraph{The first  part $\eta_h^{(2)}$ of the error estimator $\eta^{(2)}$:} % $~$\newline
		Following
		\cite{RanVi2013}, we relate the discretization error to $\eta_h^{(2)}$. 
		We use the partition of unity approach developed in \cite{RiWi15_dwr} to localize $\eta_h^{(2)}$. 
		This means that we choose a set of functions $\{\psi_1, \psi_2,  \cdots,\psi_N\}$ 
		(a typical choice would be the finite element basis functions)  such that $	\sum_{i=1}^{N} \psi_i \equiv 1$. 
		Therefore, we have the representation
			\begin{equation*} \label{Local Error Estimator: discretization}
			\eta_{h}^{(2)}:=\sum_{i=1}^{N}\eta_i,
			\end{equation*}
			with 
			\begin{equation}
			\label{eta_i_PU}
			\eta_i:=\frac{1}{2}\rho(\tilde{u})((z_h^{(2)}-\tilde{z})\psi_i)+\frac{1}{2}\rho^*(\tilde{u},\tilde{z})((u_h^{(2)}-\tilde{u})\psi_i).
			\end{equation}
		However, in contrast to our previous work \cite{EnLaWi18}, we emphasize that we do not replace $\tilde{z}$ by $i_hz_h^{(2)}$.
		In our numerical examples, we choose conforming bilinear elements $Q_1^c$ for our partition of unity.
		Furthermore, we distribute the error contributions contained in hanging nodes 
			in a way that is different from
		our previous work. For the partition of unity used in our numerical experiments, we distribute the error as in our previous work, however, splitting the error in the hanging nodes into two equal parts and add the distribution to the neighboring nodes which belong to coarser element, as illustrated in Figure~\ref{figure: distributionErrorPUHangingNodes}.
			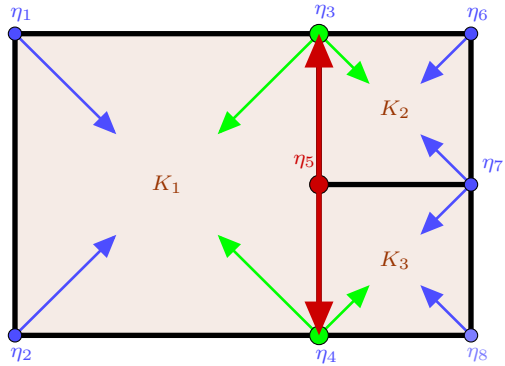
\begin{figure}[H]
				\centering
				\definecolor{ccqqqq}{rgb}{0.8,0,0}
				\definecolor{xdxdff}{rgb}{0.49019607843137253,0.49019607843137253,1}
				\definecolor{zzttqq}{rgb}{0.6,0.2,0}
				\definecolor{qqffqq}{rgb}{0,1,0}
				\definecolor{ududff}{rgb}{0.30196078431372547,0.30196078431372547,1}
				\scalebox{1.0}{
				\begin{tikzpicture}[line cap=round,line join=round,>=triangle 45,x=1cm,y=1cm]
				\clip(-5,-1) rectangle (3,5);
				\fill[line width=2pt,color=zzttqq,fill=zzttqq,fill opacity=0.10000000149011612] (-4,4) -- (-4,0) -- (0,0) -- (0,4) -- cycle;
				\fill[line width=2pt,color=zzttqq,fill=zzttqq,fill opacity=0.10000000149011612] (0,0) -- (2,0) -- (2,2) -- (0,2) -- cycle;
				\fill[line width=2pt,color=zzttqq,fill=zzttqq,fill opacity=0.10000000149011612] (2,2) -- (2,4) -- (0,4) -- (0,2) -- cycle;
				\draw [line width=2pt,color=black] (-4,4)-- (-4,0);
				\draw [line width=2pt,color=black] (-4,0)-- (0,0);
				\draw [line width=2pt,color=black] (0,0)-- (0,4);
				\draw [line width=2pt,color=black] (0,4)-- (-4,4);
				\draw [line width=2pt,color=black] (0,0)-- (2,0);
				\draw [line width=2pt,color=black] (2,0)-- (2,2);
				\draw [line width=2pt,color=black] (2,2)-- (0,2);
				\draw [line width=2pt,color=black] (2,2)-- (2,4);
				\draw [line width=2pt,color=black] (2,4)-- (0,4);
				\draw [line width=2pt,color=black] (0,4)-- (0,2);
			
				\draw [color=qqffqq][->,line width=1pt] (0,4) -- (0.6667,3.3333);
				\draw [color=ududff] [->,line width=1pt] (2,4) -- (1.3333,3.3333);
				\draw [color=ududff] [->,line width=1pt] (2,2) -- (1.3333,2.6667);
				\draw [color=qqffqq][->,line width=1pt] (0.0,-0.0) -- (0.6667,0.6667);
				\draw [color=ududff] [->,line width=1pt] (2,0) -- (1.3333,0.6667);
				\draw [color=ududff]  [->,line width=1pt] (2,2) -- (1.3333,1.3333);
				\draw [color=ududff]  [->,line width=1pt] (-4,4) -- (-2.6667,2.6667);
				\draw [color=qqffqq][->,line width=1pt] (0,4) -- (-1.3333,2.6667);
				\draw [color=ududff] [->,line width=1pt] (-4,0) -- (-2.6667,1.3333);
				\draw [color=qqffqq][->,line width=1pt] (0.0,-0.0) -- (-1.3333,1.3333);
				\begin{scriptsize}
				\draw [fill=ududff] (-4,4) circle (2.5pt);
				\draw[color=ududff] (-3.9092991945462403,4.251141456382161) node {$\eta_1$};
				\draw [fill=ududff] (-4,0) circle (2.5pt);
				\draw[color=ududff] (-3.9092991945462403,-0.2530188159990778) node {$\eta_2$};
				\draw [fill=qqffqq] (0,4) circle (3.5pt);
				\draw[color=ududff] (0.08882344583683537,4.298316944764263) node {$\eta_3$};
				\draw [fill=xdxdff] (2,0) circle (2.5pt);
				\draw[color=xdxdff] (2.093781702076136,-0.2530188159990778) node {$\eta_8$};
				\draw [fill=ududff] (2,2) circle (2.5pt);
				\draw[color=ududff] (2.293781702076136,2.2579770722383823) node {$\eta_7$};
				\draw [fill=ccqqqq] (0,2) circle (3.5pt);
				\draw[color=ccqqqq] (-0.18882344583683537,2.3051525606204835) node {$\eta_5$};
				\draw [fill=ududff] (2,4) circle (2.5pt);
				\draw[color=ududff] (2.093781702076136,4.251141456382161) node {$\eta_6$};
				\draw [fill=qqffqq] (0,0) circle (3.5pt);
				\draw[color=ududff] (0.10061731793236067,-0.28122494390355248) node {$\eta_4$};
				\draw[color=zzttqq] (-2,2) node {$K_1$};
				\draw[color=zzttqq] (1,3) node {$K_2$};
				\draw[color=zzttqq] (1,1) node {$K_3$};
					\draw [line width=2pt,color=ccqqqq] [->,line width=2pt] (0,2) -- (0,4);
					\draw [line width=2pt,color=ccqqqq] [->,line width=2pt] (0,2) -- (0,0);			
				\end{scriptsize}
				\end{tikzpicture}							
			}
			\caption{Distribution of the error contribution in a hanging node (red) to the neighboring nodes on the coarser element (green) for $Q^1_c$ basis functions as partition of unity.}\label{figure: distributionErrorPUHangingNodes}
\end{figure}

\paragraph{The second  part $\eta_k$ of the error estimator $\eta^{(2)}$:} % $~$\newline
    The second part, $\eta_k=\rho(\tilde{u})(\tilde{z})$, is related to the iteration error as in \cite{RanVi2013}. 
    Therefore, we can use this quantity as stopping rule for the nonlinear solver, e.g., for Newton's Method.
		In \cite{EnLaWi18} and \cite{RanVi2013},  $\tilde{z}$ was computed in every Newton step in order to evaluate the stopping criteria. If we further follow the path in \cite{EnLaWi18}, and do not compare the iteration error to the current discretization error as in \cite{RanVi2013}, but to the discretization error of the previous mesh, we can use the following Lemma to reduce the computational cost.

\begin{lemma}
%    \langer{
    Let 
    %$\tilde{u} \in U$ be arbitrary,
    $\tilde{u}$ be an arbitrary 
    element 
%    function
    from $U$,
    and
%    }
    $  \delta\tilde{u} \in U$ be the solution of the problem: Find  $  \delta\tilde{u} \in U$ such that
				\begin{equation} \label{Equation: Linearized Problem stopping rule adjoint}
					\mathcal{A'}(\tilde{u})(\delta\tilde{u} ,v) =	-\mathcal{A}(\tilde{u})(v)\qquad  \forall v \in V,
				\end{equation}
				and $\hat{z} \in V$ be the solution of the  problem: Find $\hat{z} \in V$  such that 
				\begin{equation} \label{Equation: Linearized Problem stopping rule primal}
					\mathcal{A'}(\tilde{u})(v,\hat{z}) =J'(\tilde{u})(v)\qquad  \forall v \in U.
				\end{equation}
				%Then it holds:
				Then we have the equation 
				%\begin{equation}
				$-\mathcal{A}(\tilde{u})(\hat{z})=J'(\tilde{u})(\delta \tilde{u})$. %\nonumber
				%\end{equation}
\end{lemma}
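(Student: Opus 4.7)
The plan is to obtain the identity by a direct testing argument, exploiting that both $\delta\tilde{u}$ and $\hat{z}$ are defined via equations that involve the \emph{same} bilinear form $\mathcal{A}'(\tilde{u})(\cdot,\cdot)$, just with different slots used as test and trial. The key observation is that the primal linearization problem (\ref{Equation: Linearized Problem stopping rule adjoint}) holds for \emph{every} test function $v \in V$, while the adjoint linearization problem (\ref{Equation: Linearized Problem stopping rule primal}) holds for \emph{every} test function $v \in U$; this flexibility is exactly what allows each solution to be inserted as a test function in the other equation.

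First, I would test (\ref{Equation: Linearized Problem stopping rule adjoint}) with the choice $v = \hat{z} \in V$, which is admissible since the equation holds for all $v \in V$. This yields
\begin{equation*}
\mathcal{A}'(\tilde{u})(\delta\tilde{u},\hat{z}) = -\mathcal{A}(\tilde{u})(\hat{z}).
\end{equation*}
Next, I would test (\ref{Equation: Linearized Problem stopping rule primal}) with the choice $v = \delta\tilde{u} \in U$, which is admissible since that equation holds for all $v \in U$. This yields
\begin{equation*}
\mathcal{A}'(\tilde{u})(\delta\tilde{u},\hat{z}) = J'(\tilde{u})(\delta\tilde{u}).
\end{equation*}
Equating the two right-hand sides, whose common left-hand side is the same evaluation of the bilinear form $\mathcal{A}'(\tilde{u})(\cdot,\cdot)$ at the pair $(\delta\tilde{u},\hat{z})$, immediately gives the desired identity $-\mathcal{A}(\tilde{u})(\hat{z}) = J'(\tilde{u})(\delta\tilde{u})$.

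There is really no technical obstacle here beyond checking that the two testing steps are legitimate, i.e.\ that $\hat{z} \in V$ and $\delta\tilde{u} \in U$ may indeed be inserted as test functions in the respective problems. This is guaranteed by construction of the two linearized problems. The practical point of the lemma—which is what justifies its placement here—is that the left-hand side $-\mathcal{A}(\tilde{u})(\hat{z})$ coincides with the iteration error contribution $\eta_k = \rho(\tilde{u})(\tilde{z})$ evaluated at $\tilde{z}=\hat{z}$, yet it can be computed via the cheaper quantity $J'(\tilde{u})(\delta\tilde{u})$, thereby avoiding the solution of the adjoint problem inside the Newton loop. No further assumptions beyond those already stated (in particular, solvability of the two linearized problems) are needed.
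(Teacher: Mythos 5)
Your argument is correct and is exactly the paper's proof: the paper's one-line chain $-\mathcal{A}(\tilde{u})(\hat{z})=\mathcal{A}'(\tilde{u})(\delta\tilde{u},\hat{z})=J'(\tilde{u})(\delta\tilde{u})$ is precisely your two testing steps, with $\hat{z}$ inserted into (\ref{Equation: Linearized Problem stopping rule adjoint}) and $\delta\tilde{u}$ into (\ref{Equation: Linearized Problem stopping rule primal}). No gap; you have simply written out the admissibility of the test functions that the paper leaves implicit.
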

\begin{proof} It is trivial to see that
					%\begin{equation*}
						$-\mathcal{A}(\tilde{u})(\hat{z})=\mathcal{A'}(\tilde{u})(\delta\tilde{u} ,\hat{z}) =J'(\tilde{u})(\delta \tilde{u})$.
					%\end{equation*}
\end{proof}

			\begin{remark}
				This means that, instead of solving the adjoint problem, we can solve for the upcoming Newton update in advance.
				This only holds true if the Newton update $\delta \tilde{u}$ and  $\hat{z}$ are the exact solutions of (\ref{Equation: Linearized Problem stopping rule primal}) and (\ref{Equation: Linearized Problem stopping rule adjoint}), respectively.
			\end{remark}

\paragraph{The third  part $\eta^{(2)}_\mathcal{R}$ of the error estimator $\eta^{(2)}$:} %$~$\newline
		The third part $R^{(3)(2)}$ was neglected in \cite{EnLaWi18}. 
		We localize this error by the local contributions of this error estimator parts computed on the elements. 
%		\langer{
		This leads to the local remainder 
		%error estimator part 
			\begin{equation} \label{Equation: LocalErrorContributionRemainder}
			\eta_{\mathcal{R},K}^{(2)} := \mathcal{R}^{(3)(2)}_{|K}, 
			\end{equation}
		in third error estimator part 	on the element $K$.
%				}
			Alternatively, one could also use again the partition of unity approach, which was discussed for the first part.

%##############################################################################%

\section{Multiple goal functionals}
\label{Section: Multigoalfunctionals}
		For completeness of presentation we shortly recall the
                multigoal approach presented in \cite{EnLaWi18}.
                From a general  point of view,  it may be questionable whether this
                approach is computationally interesting in comparison to
                	the use of
                uniform mesh refinement. 
                However,
                our previous studies have 
                shown excellent results. Moreover, this approach has still the
                advantage that we have an error estimator (and not only
                indicators for mesh refinement) providing
                us concrete quantitative numbers that are useful as stopping criteria or
                error information engineering applications.

In the following, we  assume that we are interested in the evaluation of $N$ functionals, which we denote by
		$J_1, J_2, \ldots,$ $J_{N-1}$, and $J_{N}$.
		We already derived how to compute local error estimators for a single functional. It would be possible to compute the local error contribution of all $N$ functionals separately, and add them up afterwards. 
		However,  
		%for this 
		we would have to solve $N$ adjoint problems in this case.
		Therefore, we follow the idea in \cite{HaHou03,Ha08} to combine
                the goal functionals. 
		To this end, we assume that a solution $u$ of problem (\ref{Equation: Cont Primal Problem}) and the chosen $\tilde{u} \in U$ belong to $\bigcap_{i=1}^N \mathcal{D}(J_i)$, where $\mathcal{D}(J_i)$ describes the domain of $J_i$. 
		\begin{definition}[error-weighting function \cite{EnLaWi18}]
			Let $ M \subseteq \mathbb{R}^N$. 
			We say that $\mathfrak{E}: (\mathbb{R}^+_0)^N \times  M \mapsto \mathbb{R}^+_0$ is an \textit{error-weighting
				function} if  $\mathfrak{E}(\cdot,m) \in
			\mathcal{C}^1((\mathbb{R}^+_0)^N,\mathbb{R}^+_0)$ is strictly monotonically
			increasing in each component and $\mathfrak{E} (0,m)=0$ for all
			$m \in M$.
		\end{definition}
		Let us define  $\vec{J}: \bigcap_{i=1}^N \mathcal{D}(J_i) \subseteq U \mapsto \mathbb{R}^N$  as $\vec{J}(v):=(J_1(v),J_2(v),\cdots, J_{N}(v) )$ for all $v \in \bigcap_{i=1}^N \mathcal{D}(J_i)$. Furthermore, we define the operation $|\cdot|_N:\mathbb{R}^N\mapsto (\mathbb{R}^+_0)^N$ as $|x|_N:= (|x_1|,|x_2|,\cdots,|x_N|)$ for $x \in \mathbb{R}^N $.
		Following \cite{EnLaWi18}, the \text{error functional} is given by
		\begin{align}
			\tilde{J}_{\mathfrak{E}}(v):=\mathfrak{E}(|\vec{J}(u)-\vec{J}(v)|_N, \vec{J}(\tilde{u})) \qquad \forall v \in \bigcap_{i=1}^N \mathcal{D}(J_i). \nonumber
		\end{align}

	Of course, the exact solution $u$ is not known. 
	%and therefore 
	Therefore,
	$\tilde{J}_{\mathfrak{E}}$ cannot be computed. As for the error estimate itself,  we use the approximation $u_h^{(2)}$ in the enriched space instead of an exact solution $u$  to approximate $\tilde{J}_{\mathfrak{E}}$ and $J_{\mathfrak{E}}$.  This finally reads as follows
	\begin{align}	\label{ErrorrepresentationFunctionalapprox}
		J_{\mathfrak{E}}(v):=\mathfrak{E}(|\vec{J}(u_h^{(2)})-\vec{J}(v)|_N, \vec{J}(\tilde{u})) \qquad \forall v \in \bigcap_{i=1}^N \mathcal{D}(J_i).
	\end{align}
\begin{propositon}
If Assumption~\ref{Assumption: Better approximation} is fulfilled 
for $\tilde{u}_1$ and $\tilde{u}_2$,
and if 
\[
J_i(u_h^{(2)}) ~\not \in ~[J_i(\tilde{u}_1),J_i(\tilde{u}_2)]
~\cup~[J_i(\tilde{u}_2),J_i(\tilde{u}_1)],
\]
for all $J_i$, $i=1,\ldots, N$, 
%}
then we avoid  error cancellation, i.e, 
%if it holds
if
%\[ 
$
|J_i(u)-J_i(\tilde{u}_1)| \leq
|J_i(u)-J_i(\tilde{u}_2)|  \quad\forall i \in \{1, \cdots, N\},
$
then
%\[
$
J_{\mathfrak{E}}(\tilde{u}_1) \leq J_{\mathfrak{E}}(\tilde{u}_2). 
$
%\]
\end{propositon}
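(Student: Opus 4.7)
The plan is to reduce the multi-functional inequality $J_{\mathfrak{E}}(\tilde{u}_1) \leq J_{\mathfrak{E}}(\tilde{u}_2)$ to the componentwise claim
\[
|J_i(u_h^{(2)}) - J_i(\tilde{u}_1)| \;\leq\; |J_i(u_h^{(2)}) - J_i(\tilde{u}_2)|, \qquad i = 1, \dots, N,
\]
and then invoke the componentwise monotonicity of the error-weighting function $\mathfrak{E}(\cdot, \vec{J}(\tilde{u}))$ together with the definition (\ref{ErrorrepresentationFunctionalapprox}). The reduction is natural because the hypothesis on error domination is stated with respect to the exact $u$, while $J_{\mathfrak{E}}$ is defined through $u_h^{(2)}$; so the real task is to transfer the domination from $u$ to $u_h^{(2)}$ using the saturation Assumption~\ref{Assumption: Better approximation}.

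The heart of the argument is this transfer, and I expect it to be the main obstacle. Fix $i$ and abbreviate $a := J_i(\tilde{u}_1)$, $b := J_i(\tilde{u}_2)$, $c := J_i(u_h^{(2)})$, $x := J_i(u)$. Without loss of generality assume $a \leq b$; the non-interleaving hypothesis then forces $c < a \leq b$ or $a \leq b < c$, and the degenerate sub-case $a = b$ is handled trivially by $|c-a| = |c-b|$. In the first case, $|c-a| = a-c \leq b-c = |c-b|$, which is exactly what is needed. To rule out the second case $a < b < c$, the premise $|x-a| \leq |x-b|$ forces $x \leq (a+b)/2$, and a short case distinction then yields $|x-c| \geq |x-a|$, directly contradicting the saturation bound $|x-c| < b_0 |x-a|$ with $b_0 \in (0,1)$. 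More precisely, for $x \leq a$ one has $c > a \geq x$, so $c - x > a - x = |x-a|$; for $a < x \leq (a+b)/2$ the bound $c > b$ gives $c - x > (b-a)/2 \geq x - a = |x-a|$; and the borderline $x = a$ is excluded because saturation would then require $|x-c| < 0$.

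Once the componentwise inequality is established for every $i$, applying $\mathfrak{E}(\cdot, \vec{J}(\tilde{u}))$ to the two nonnegative vectors $|\vec{J}(u_h^{(2)}) - \vec{J}(\tilde{u}_1)|_N$ and $|\vec{J}(u_h^{(2)}) - \vec{J}(\tilde{u}_2)|_N$, and using strict monotonicity in each component (via $\mathcal{C}^1$ regularity), delivers the desired $J_{\mathfrak{E}}(\tilde{u}_1) \leq J_{\mathfrak{E}}(\tilde{u}_2)$. I do not foresee further subtleties; the proof uses the saturation hypothesis in a fundamentally one-dimensional, per-functional manner, and the only structural ingredient inherited from the multigoal framework is the monotone behaviour of the error-weighting function.
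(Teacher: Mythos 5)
Your proof is correct and follows essentially the same route as the paper: reduce to the per-functional inequality $|J_i(u_h^{(2)})-J_i(\tilde{u}_1)| \leq |J_i(u_h^{(2)})-J_i(\tilde{u}_2)|$, transfer the domination from $u$ to $u_h^{(2)}$ via a one-dimensional case analysis combining the non-interleaving condition with the saturation bound, and finish with the componentwise monotonicity of $\mathfrak{E}$. The only cosmetic difference is that you rule out the ``bad'' ordering by contradiction with saturation, whereas the paper first locates $J_i(u)$ on the same side as $J_i(u_h^{(2)})$ and then reads off the ordering; your sub-case computations are, if anything, more explicit than the paper's.
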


\begin{proof} 
For $\tilde{J}_{\mathfrak{E}}$, it is clear that 
%it holds
\[
\forall i \in \{1, \cdots, N\} \qquad |J_i(u)-J_i(\tilde{u}_1)| 
\leq |J_i(u)-J_i(\tilde{u}_2)| \implies    
\tilde{J}_{\mathfrak{E}}(\tilde{u}_1) \leq
\tilde{J}_{\mathfrak{E}}(\tilde{u}_2). 
\]
Indeed, for
	%\[
	$
	|J_i(u)-J_i(\tilde{u}_1)| \leq |J_i(u)-J_i(\tilde{u}_2)|,
	$
	%\] 
and due to the construction of the error weighting function $\mathfrak{E}$ 
(strictly monotonically increasing in each component), 
%we obtain  immediately no error cancellation.
%\langer{
	we do not obtain any error  cancellation.
%	}
%\langer{
However, since $J_i(u)$ is unknown,
we work with the finer discrete solution $u_h^{(2)}$ 
rather than the exact solution $u$, 
%observing $u_h^{(2)}$ rather than $u$ 
and show that 
	\[
	|J_i(u_h^{(2)})-J_i(\tilde{u}_1)| \leq |J_i(u_h^{(2)})-J_i(\tilde{u}_2)|, 
	\]
holds true. In other words,
\[
|J_i(u)-J_i(\tilde{u}_1)| \leq |J_i(u)-J_i(\tilde{u}_2)|
\]
and
\[
J_i(u_h^{(2)}) ~\not \in ~[J_i(\tilde{u}_1),J_i(\tilde{u}_2)]
~\cup~[J_i(\tilde{u}_2),J_i(\tilde{u}_1)]
\]
imply
\[
|J_i(u_h^{(2)})-J_i(\tilde{u}_1)| \leq |J_i(u_h^{(2)})-J_i(\tilde{u}_2)|.
\]
%}
Without loss of generality, we assume that 
$
J_i(u_h^{(2)}) < J_i(\tilde{u}_1)$ and  $J_i(u_h^{(2)}) < J_i(\tilde{u}_2).
$
From Assumption~\ref{Assumption: Better approximation} and 
$J_i(u_h^{(2)}) ~\not \in ~[J_i(\tilde{u}_1),J_i(\tilde{u}_2)]~\cup~[J_i(\tilde{u}_2),J_i(\tilde{u}_1)]$,
we conclude that $J_i(u)$ does not belong to the union of the 
intervals $[J_i(\tilde{u}_1),J_i(\tilde{u}_2)]$ and $[J_i(\tilde{u}_2),J_i(\tilde{u}_1)]$.
We now distinguish two  cases. First, 
if $J_i(\tilde{u}_1)=J_i(\tilde{u}_2)$,  the statement 
\[
|J_i(u_h^{(2)})-J_i(\tilde{u}_1)| \leq |J_i(u_h^{(2)})-J_i(\tilde{u}_2)|
\]
follows immediately.
In the second case, for $J_i(\tilde{u}_1)\not=J_i(\tilde{u}_2)$,  
Assumption~\ref{Assumption: Better approximation} allows us to conclude 
that we  have either
\[
J_i(u_h^{(2)}) \leq J_i(u)  < J_i(\tilde{u}_1) < J_i(\tilde{u}_2)
\]
or  
\[
J_i(u) <J_i(u_h^{(2)})   < J_i(\tilde{u}_1) < J_i(\tilde{u}_2). 
\]
Both cases imply
%\[
$
|J_i(u_h^{(2)})-J_i(\tilde{u}_1)| \leq |J_i(u_h^{(2)})-J_i(\tilde{u}_2)|, 
$
%\]
which concludes the proof.
\end{proof}

\begin{remark}			
		We notice that in \cite{HaHou03,Ha08,EnWi17}, the functionals were combined as follows
		\begin{equation}\label{J_c}
		{J}_c(v):=\sum_{i=1}^{N}{\frac{\omega_i\text{ sign}(J_i(u_h^{(2)})-J_i(\tilde{u}))}{|J_i(\tilde{u})|}}J_i(v) \quad \forall v\in \bigcap_{i=0}^N \mathcal{D}(J_i).\nonumber
		\end{equation}
%		\langer{
		For the error weighting function
                                        $\mathfrak{E}(x,\vec{J}(\tilde{u})):=
                                        \sum_{i=1}^{N} \frac{\omega_i
                                          x_i}{|J_i(\tilde{u})|}$,
                                       % \cite{EnLaWi18},
        which yields that the error functional $J_\mathfrak{E}$   
                                        coincides with $(-J_c)$ up to a
                                        constant \cite{EnLaWi18},
%                                        .
%                                         For $J_c$ 
                                         the condition
\[
J_i(u_h^{(2)}) ~\not \in ~[J_i(\tilde{u}_1),J_i(\tilde{u}_2)]
~\cup~[J_i(\tilde{u}_2),J_i(\tilde{u}_1)]
\]
is not required to avoid error cancellation.
%}
				
\end{remark}

%##############################################################################%
\section{Algorithms}
\label{Section: Algorithms}
		In this section, we describe  
			the algorithmic realizations of
                our theoretical work.
                The spatial discretization is based
                on the finite element method.
		However, the algorithms
		 	presented below can be adapted to
                other discretization techniques as well.
		We use the same finite element 
		discretizations 
                as 
		in our previous work \cite{EnLaWi18}, 
i.e continuous bilinear elements for $U_h$ and $V_h$ and continuous
bi-quadratic elements for the enriched spaces $U_h^{(2)}$ and $ V_h^{(2)}$ in
the two dimensional case.

%-----------------------------------------------------------------------------------
		\subsection{Newton's algorithm} 
		Newton's method for solving the nonlinear variational 
		problem (\ref{Equation: Discret Primal Problem})
		on  refinement level $l$ 
		is stated in Algorithm~\ref{Algorithm: adaptive_newton}.
		Below we identify $u_h^{l,k}$ with the corresponding vector
		with respect to the chosen basis when we compute 
		$\Vert \delta u_h^{l,k} \Vert_{\ell_\infty}$.  Furthermore for
                the following algorithm let $\varsigma^{l,k}_h$ be defined as 
\[
\varsigma^{l,k}_h :=~\frac{\Vert\delta u^{l,k-1}_h\Vert_{\ell_\infty}}{1-
  (\Vert\delta u^{l,k-1}_h\Vert_{\ell_\infty} /\Vert \delta u^{l,k-2}_h
  \Vert_{\ell_\infty})^2 },
\]
leading to a stopping criteria which is motivated by \cite{Deuflhard2011}.

		\begin{algorithm}[H]
			\caption{Adaptive Newton algorithm for multiple goal functionals  on level $l$}
			\label{Algorithm: adaptive_newton}
			\begin{algorithmic}[1]
				\State %with initial guess $u^{l,0}_h$ and $k=0$.
				Start with some initial guess $u^{l,0}_h \in
				U_h^l$, set $k=0$,
				and set $TOL_{Newton}^l > 0$.
				\While{$\varsigma^{l,k}_h> TOL_{Newton}^l  (\Vert u^{l,k}_h \Vert_{\ell_\infty} + \Vert\delta u^{l,k-1}_h \Vert_{\ell_\infty})$  or  $\varsigma^{l,k}_h<0$}
				\State Solve for $\delta u^{l,k}_h$, 
				$$ \mathcal{A}'(u^{l,k}_h)(\delta u^{l,k}_h,v_h)=-\mathcal{A}(u^{l,k}_h)(v_h)  \quad \forall v_h \in 
				V_h^l.$$
				\State Update : $u^{l,k+1}_h=u^{l,k}_h+\alpha \delta u^{l,k}_h$ for some good choice $\alpha \in (0,1]$.
				\State $k=k+1.$
				\EndWhile
			\end{algorithmic}
		\end{algorithm}

%\twick{
\begin{remark}		
The arising linear systems are solved using the direct solver UMFPACK \cite{UMFPACK}.
\end{remark}

		\begin{remark}
			In Algorithm~\ref{Algorithm: adaptive_newton},
			we choose $\Vert \delta u^{l,-2}_h
                        \Vert_{\ell_\infty}:=1$, $\Vert \delta u^{l,-1}_h
                        \Vert_{\ell_\infty}:=0.99$ and $TOL_{Newton}^l = 10
                        ^{-8} $. 
To compute $\alpha$,
we used the same line search method as described in \cite{EnLaWi18}.
		\end{remark}

%-----------------------------------------------------------------------------------
		
\subsection{Adaptive Newton algorithms for multiple goal functionals }
		
		In this section, we describe the key algorithm. The basic structure 
		of the algorithm is similar to 
		%the works 
		that
		presented in \cite{EnLaWi18,RanVi2013} and \cite{ErnVohral2013}.
		In contrast to previous work, we replace the stopping criteria 	
%\[
$
|A(u^{l,k}_h)(z^{l,k}_h)|> 10^{-2} \eta_h^{l-1},
$
%\] 
which was used in \cite{EnLaWi18}, 
by 
%\[
$
|(J_{\mathfrak{E}}^{(k)})'(\delta u^{l,k}_h)|> 10^{-2} \eta_h^{l-1}.
$
%\] 
However, this is only possible since we assume that the linear problem is
solved exactly, and we replace the error estimator on the current level 
by that one of the previous level.
%with the one of the previous level. 
%
\begin{remark}
In the algorithms
    developed in  \cite{RanVi2013}, the computation of the adjoint solution could not be avoided since it was also needed to compute the current discretization error estimator.
\end{remark}

		\begin{algorithm}[H]
			\caption{Adaptive Newton algorithm for multiple goal
				functionals on level $l$ } \label{inexat_newton_algorithm_for_multiple_goal_functionals}
			\begin{algorithmic}[1]
				\State Start with  some initial guess $u^{l,0}_h \in U_h^l$ and $k=0$.
				\State Construct $(J_{\mathfrak{E}}^{(0)})'$ constructed with $u^{l,(2)}_h$ and $u^{l,0}_h$
				\State For $\delta u^{l,k}_h$, solve $$ \mathcal{A}'(u^{l,k}_h)(\delta u^{l,k}_h,v_h)=-\mathcal{A}(u^{l,k}_h)(v_h)  \quad \forall v_h \in V_h^l.$$
				\While{$|(J_{\mathfrak{E}}^{(k)})'(\delta u^{l,k}_h)|> 10^{-2} \eta_h^{l-1}$}
			
				\State Update : $u^{l,k+1}_h=u^{l,k}_h+\alpha \delta u^{l,k}_h$ for some good choice $\alpha \in (0,1]$.
				%		\State $u^{l,k,(2)}_h=I_h^{h_2}u^{l,k}_h$.
				\State $k=k+1.$
					\State For $\delta u^{l,k}_h$, solve $$ \mathcal{A}'(u^{l,k}_h)(\delta u^{l,k}_h,v_h)=-\mathcal{A}(u^{l,k}_h)(v_h)  \quad \forall v_h \in V_h^l.$$
			\State Construct $(J_{\mathfrak{E}}^{(k)})'$ constructed with $u^{l,(2)}_h$ and $u^{l,k}_h$
				\EndWhile
			\end{algorithmic}
		\end{algorithm}

\begin{remark}
The last Newton update in Algorithm
\ref{inexat_newton_algorithm_for_multiple_goal_functionals} is only used in
the stopping criterion. Of course, one can use this update to perform a very
last Newton update step for a final improvement of the solution.
\end{remark}
\begin{remark}	
We can also use Algorithm~\ref{inexat_newton_algorithm_for_multiple_goal_functionals}
for the enriched problem,
replacing the stopping criterion
%by replacing 
%\[
$
|(J_{\mathfrak{E}}^{(k)})'(\delta u^{l,k}_h)|> 10^{-2} \eta_h^{l-1}
$
%\] 
%by checking if 
by
%\[
$
|J_i'(\delta u^{l,k}_h)|< TOL_i^l.
$
%\] 
This stopping criterion can also be used for this algorithm, which makes Algorithm \ref{final algorithm} more flexible.
		\end{remark}
		%}

%-----------------------------------------------------------------------------------
		
\subsection{The final algorithm} 
In this subsection, we formulate the overall algorithm starting with an initial mesh $\mathcal{T}_h^1$ and the corresponding finite element spaces 
		$V_h^1$,  $U_h^1$, $U_{h}^{1,(2)}$ and $V_{h}^{1,(2)}$, where $U_{h}^{1,(2)}$ and $V_{h}^{1,(2)}$ are the
		enriched finite element spaces.
		The refinement procedure creates  a sequence of finer and finer 
		meshes
		$\mathcal{T}_h^l$  leading to the corresponding finite element spaces
		$V_h^l$,  $U_h^l$, $U_{h}^{l,(2)}$ and $V_{h}^{l,(2)}$ for $l=2,3,\ldots$ .

		\begin{algorithm}[H]
			\caption{The final algorithm }\label{final algorithm}
			\begin{algorithmic}[1]
				\State Start with some initial guess 
				$u_{h}^{0,(2)}$,$u_h^{0}$, set $l=1$
				and set $TOL_{dis} > 0$.
				\State Solve  (\ref{Equation: Discret Primal Problem}) for  $u_h^{l,(2)}$ using Algorithm \ref{Algorithm: adaptive_newton} with the initial guess $u_h^{l-1,(2)}$ on the discrete space $U_{h}^{l,(2)}$. \label{solve Uh2}
				\State Solve (\ref{Equation: Discret Primal Problem}) using Algorithm \ref{inexat_newton_algorithm_for_multiple_goal_functionals} with the initial guess $u_h^{l-1}$ on the discrete spaces $U_{h}^l$.  \label{final algorithm: solveprimal}
				\State Construct the combined functional $J_{\mathfrak{E}}$ as in (\ref{ErrorrepresentationFunctionalapprox}).
				\State Solve the adjoint problem (\ref{Equation : discrete adjoint Problem}) for $J_{\mathfrak{E}}$ on $V_h^{l,(2)}$ and $V_h^l$. \label{final algorithm: solveadjoint}
				\State Construct the error estimator $\eta_K$  by distributing $\eta_i$ defined in (\ref{eta_i_PU}) to the elements and adding the local remainder contributions $\eta_{\mathcal{R},K}^{(2)} $ defined in (\ref{Equation: LocalErrorContributionRemainder}).
				\State Mark elements with some refinement strategy. \label{final algorithm: refinement}
				\State Refine marked elements: $\mathcal{T}_h^l \mapsto\mathcal{T }_h^{l+1}$ and $l=l+1$.
				\State If $|\eta_h| < TOL_{dis}$ stop, else go to \ref{solve Uh2}.
			\end{algorithmic}
		\end{algorithm}
		
		As already explained above, we replace  the estimated error
                $\eta_h^{l,(2)}$ by  $\eta_h^{l-1,(2)}$ to avoid the
                evaluation of the error estimator and the computation of the
                adjoint solution in step~\ref{final algorithm: solveprimal} 
		of  Algorithm~ \ref{inexat_newton_algorithm_for_multiple_goal_functionals}.
		Thus,  $\eta_h^{l-1}$ is not defined on the first 
		level.
		Therefore, we set 
		%it to 
		$\eta_h^{0}:=10^{-8}$. 
		This means that we perform more iterations on the coarsest level. 
		However, solving on this level is very cheap.

\begin{remark}
The refinement procedure used in our numerical examples  in step \ref{final algorithm: refinement} of Algorithm \ref{final algorithm} is based on the \textit{fixed-rate strategy} described in \cite{BaRa03} with $X = 0.1$ and $Y=0.0$. 
However, in contrast to this procedure, we additionally mark one more element and all elements with the same error contribution as the smallest of the marked cells.
\end{remark}

%##############################################################################%
\section{Numerical examples}
\label{Section: Numerical examples}
                In order to support our theoretical and algorithmic
                developments, some numerical tests are performed in this section.
	        These examples are based on a regularized $p$-Laplace equation  with a very small regularization parameter $\varepsilon >0$ and  $p \in (1, \infty)$. 
	        
		In the first example, we consider a problem with a non-smooth
                analytical solution on the unit square. Here we investigate the behavior in the case of 
                single goal
                functionals. In the second example, we investigate the
                behavior of multiple goal functionals on a more complicated domain. 
                In these tests, we also provide computational hints on the
                validity of the saturation assumptions (despite that for the
                specific choices, we cannot proof that the saturation
                assumptions hold true).
                The implementation is based on the finite element library 
		deal.II \cite{dealII84} and the extension of our previous work \cite{EnWi17}.

%%%%%%%%%%%%%%%%%%%%%%%%%%%%%%%%%%%%%%%%%%%%%%%%%%%%%%%%%%
\subsection{A single goal functional}
In the first set of computations, we consider the boundary value problem
\begin{equation}
\label{Equation: Example1}
 -\text{div}((\varepsilon^2 + |{\nabla u}|^2)^{\frac{p-2}{2}}\nabla u) =f \;   \text{in }\Omega
 \quad \text{and}  \quad 
				u = 0 \; \text{on } \partial \Omega,
\end{equation}
		with $p=4$ and $\varepsilon = 10^{-10}$ and $f$ such that $u(x,y)=\sqrt{x^2+y^2}(x^2-1)(y^2-1)$ is the exact solution. The computational domain $\Omega$ is the unit square $(-1,1) \times (-1,1)$.
		As goal functional we consider 
\[
J (u):= u(0,0)=0.
\]
This point is exactly where the singularity of the solution is, which is
visualized on Figure~\ref{figure: solution+mesh+adjointsolution}
(left). Furthermore, there is a line singularity, where $\nabla u =0$ leading to additional refinement on these lines and a high gradient of the adjoint solution $z_h$ due to the small regularization parameter, which can be monitored at Figure~\ref{figure: solution+mesh+adjointsolution} (middle, right).
		\begin{figure}[H]
				\includegraphics[width=0.333\textwidth]{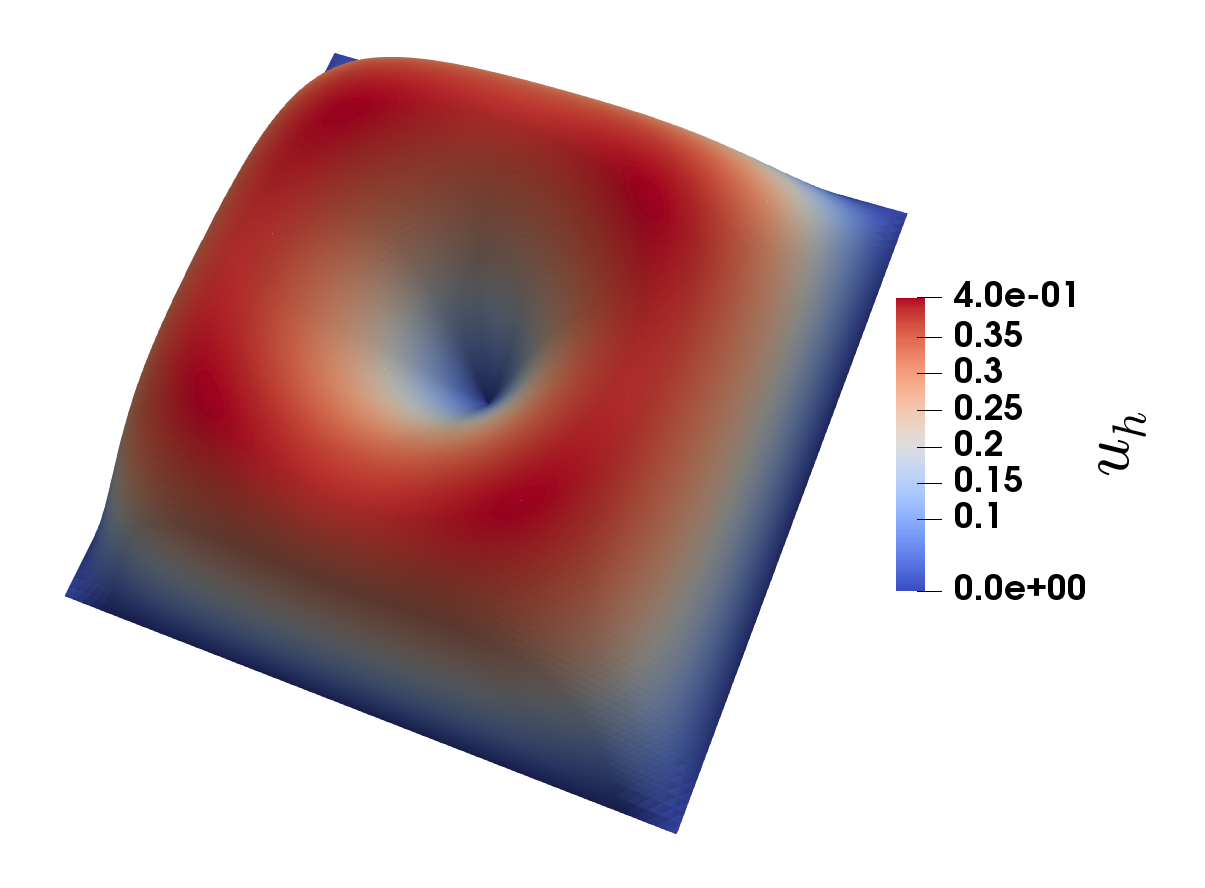}			
				\includegraphics[width=0.333\textwidth]{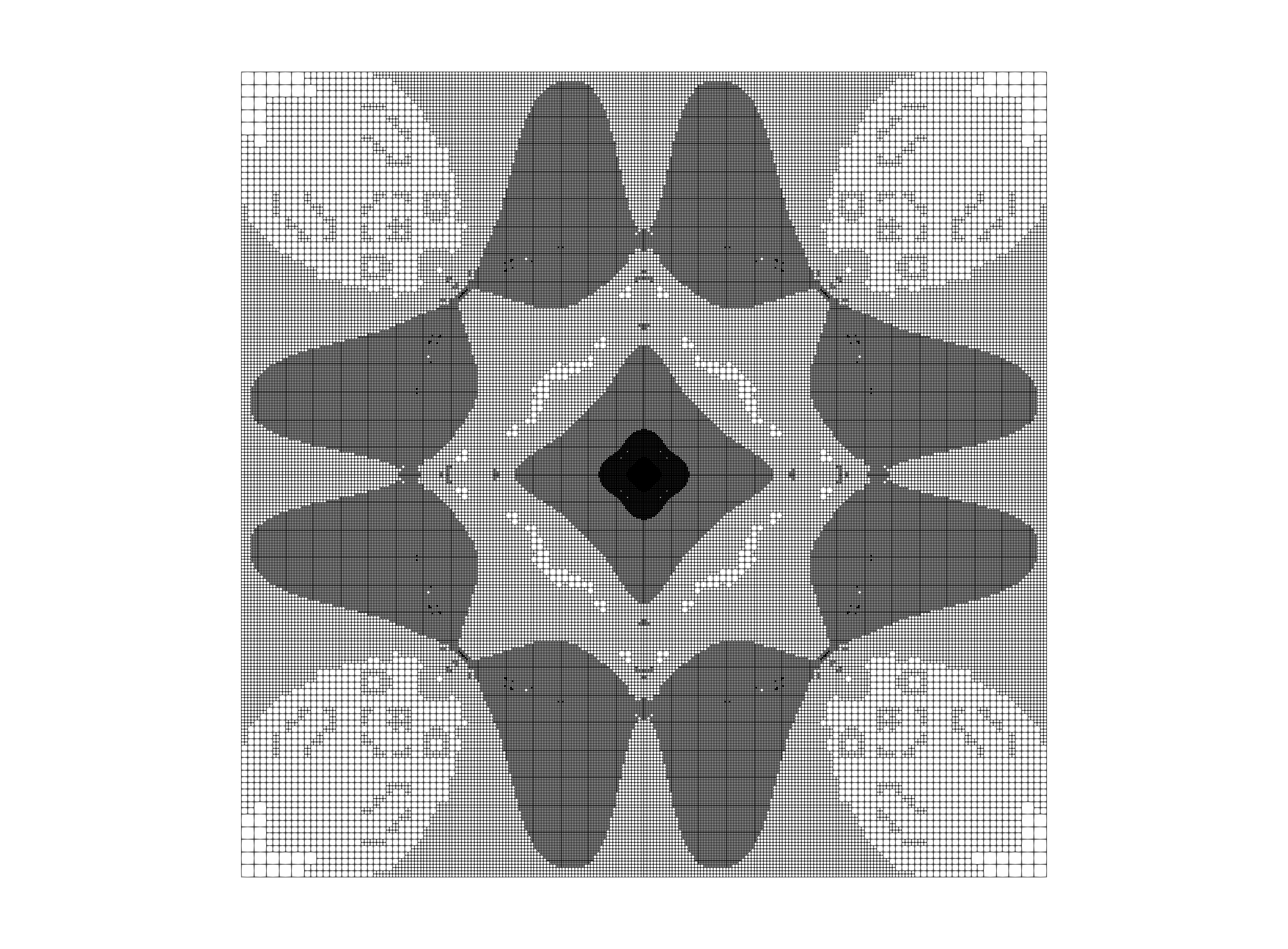}
				\includegraphics[width=0.333\textwidth]{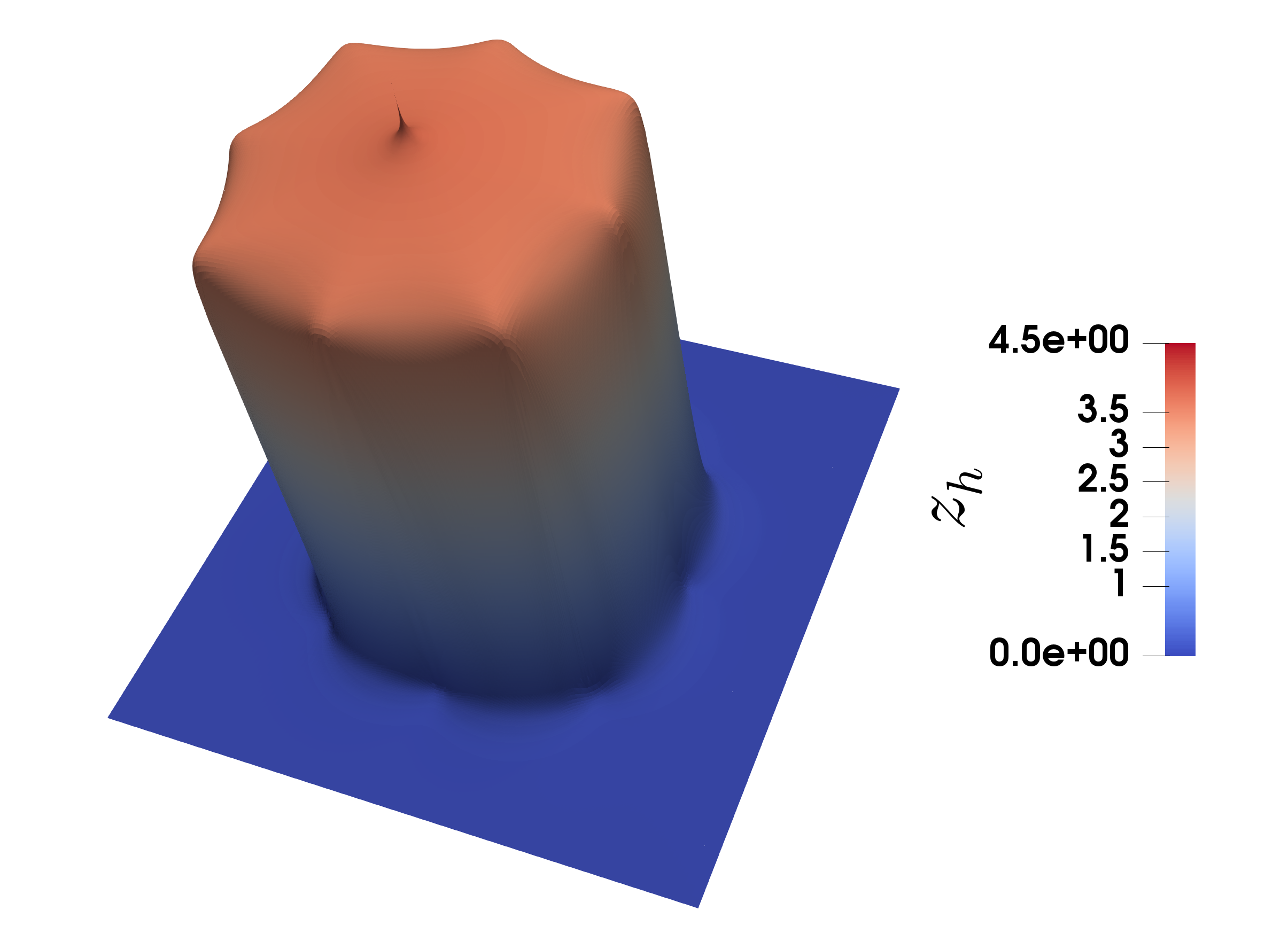}				
				\caption{Approximation of the solution (left), the adjoint solution (right)
					 on the mesh (middle) achieved on level $l$=31 ($144$ $785$ DOFs).} \label{figure: solution+mesh+adjointsolution}
		\end{figure}%

			Inspecting the error in our goal functional $u(0,0)$ for uniform refinement shown in Figure~\ref{figure: Example1Errors}, it turns out that we have a worser convergence rate than $\mathcal{O}(\text{DOFs}^{-\frac{1}{2}}) \approx \mathcal{O}(h)$.
				Adaptivity leads to
			a convergence rate of approximately $\mathcal{O}(\text{DOFs}^{-1})$, 
			i.e., 
			to reach the same accuracy as 
			with more than $1\, 000\, 000$ DOFs using uniform
                        refinement, we need less than 10 000 DOFs.

			Furthermore, we monitor that the influences of the remainder term and iteration error vanish during the refinement process, as expected. 
			Specifically, the estimator part $|\eta^{(2)}_\mathcal{R}|$ shows a higher-order behavior as expected, 
			but has an influence on coarse meshes. 
%			%

		Moreover, in this numerical example, Assumption~\ref{Assumption: Better approximation} and Assumption~\ref{Assumption: With Remainder} seem to be fulfilled even with the additional condition that $b_h \rightarrow 0$ and $b_{h,\gamma} \rightarrow 0$ on adaptive meshes, which we also observe in Figure~\ref{figure: Example1adaptiveIeff} as well as in Table~\ref{Table: IeffAndErrorsSingleGoal}. 
			On the other hand, we observe a completely different behavior on uniformly refined meshes in Figure~\ref{figure: Example1uniformIeff}. The effectivity indices are approximately $0.1 - 0.2$, 
			%which means 
			i.e.,
			our estimator determines the error better on adaptively refined meshes. 
			In  Theorem \ref{Theorem: Ieffbounds}, we prove that the efficiency depends on the constant $b_0$ in the saturation assumption. 
			We assume that, for this example, $b_0$ is closer to $1$ in the case of uniform refinement, 
			while, for adaptive refinement, we also recover parts of the optimal convergence rate for the enriched space, and, therefore, we obtain $b_h \rightarrow 0$ and $b_{h,\gamma} \rightarrow 0$.
	
		\begin{table}[]
			\centering
			\begin{tabular}{|l|r|l|l|c|c|c|}
				\hline
				$l$ & {DOFs} & $I_{eff} $& $I_{eff,\gamma} $ & $|\eta^{(2)}|$ & $|\eta_h^{(2)}|$ & $|J(u)-J(u_h)|$ \\ \hline
				1              & 9                                 & 0.753    & 0.237 & 7.17E-01               & 2.26E-01         & 9.52E-01     \\ \hline
				2              & 25                                & 1.007    & 1.836 & 1.93E-01               & 3.51E-01         & 1.91E-01     \\ \hline
				5              & 133                               & 0.608    & 0.910 & 4.80E-02               & 7.18E-02         & 7.89E-02     \\ \hline
				10             & 605                               & 0.745    & 0.858 & 1.04E-02               & 1.20E-02         & 1.39E-02     \\ \hline
				15             & 2 365                              & 0.882    & 0.877 & 2.61E-03               & 2.59E-03         & 2.95E-03     \\ \hline
				20             & 8 481                              & 0.923    & 0.917 & 6.00E-04               & 5.95E-04         & 6.49E-04     \\ \hline
				25             & 31 649                             & 0.984    & 0.973 & 2.00E-04               & 1.98E-04         & 2.03E-04     \\ \hline
				30             & 111 793                            & 0.995    & 0.992 & 4.27E-05               & 4.26E-05         & 4.29E-05     \\ \hline
				35             & 410 201                            & 0.999    & 0.996 & 1.12E-05               & 1.12E-05         & 1.12E-05     \\ \hline
				39             & 1 166 237                           & 1.000    & 1.004 & 3.74E-06               & 3.76E-06         & 3.74E-06     \\ \hline
				40             & 1 513 865                           & 1.000    & 1.000 & 2.96E-06               & 2.96E-06         & 2.96E-06     \\ \hline
			\end{tabular}
			\caption{Effectivity indices and errors for adaptive
                          refinement. Here, $l$ denotes the refinement
                          level. Several intermediate levels are left out for
                          the sake of a clearly arranged table.}\label{Table: IeffAndErrorsSingleGoal}
		\end{table}

\begin{figure}[h]
	\centering			
	\ifMAKEPICS
	\begin{gnuplot}[terminal=epslatex]
		set output "Figures/Example3_archive.tex"
		set key bottom left
		set key opaque
		set datafile separator "|"
		set logscale x
		set logscale y
		set xrange [3:5000000]
		set yrange [1e-10:10]
		set grid ytics lc rgb "#bbbbbb" lw 1 lt 0
		set grid xtics lc rgb "#bbbbbb" lw 1 lt 0
		set xlabel '\text{DOFs}'
		set format '%g'
		plot  '< sqlite3 Data/dataSingle.db "SELECT DISTINCT DOFS_primal, Exact_Error from data "' u 1:2 w  lp lw 3 title ' \footnotesize Error (adapt.)',\
		'< sqlite3 Data/dataSinglegoal.db "SELECT DISTINCT DOFS_primal, Exact_Error from data_global "' u 1:2 w  lp lw 2 title ' \footnotesize Error (unif.)',\
		'< sqlite3 Data/dataSingle.db "SELECT DISTINCT DOFS_primal, Estimated_Error_remainder from data "' u 1:2 w  lp lw 2 title '\scriptsize$|\eta^{(2)}_\mathcal{R}|$',\
		'< sqlite3 Data/dataSingle.db "SELECT DISTINCT DOFS_primal, abs(ErrorTotalEstimation) from data "' u 1:2 w  lp lw 2 title '\scriptsize$|\eta^{(2)}|$',\
		'< sqlite3 Data/dataSingle.db "SELECT DISTINCT DOFS_primal, 0.5*abs(Estimated_Error_adjoint+Estimated_Error_primal) from data "' u 1:2 w  lp lw 2 title '\scriptsize$|\eta^{(2)}_h|$',\
		1/x  dt 3 lw  4 title '\footnotesize$\mathcal{O}(\text{DOFs}^{-1})$',\
		1/(x**0.5)  dt 3 lw  4 title '\footnotesize$\mathcal{O}(\text{DOFs}^{-\frac{1}{2}})$'
		#plot  '< sqlite3 Data/dataSingle.db "SELECT DISTINCT DOFS_primal, Exact_Error from data "' u 1:2 w  lp lw 3 title ' \small $|J(u)-J(u_h)|$ (a)',\
		'< sqlite3 Data/dataSinglegoal.db "SELECT DISTINCT DOFS_primal, Exact_Error from data_global "' u 1:2 w  lp lw 2 title ' \small $|J(u)-J(u_h)|$ (u)',\
		'< sqlite3 Data/dataSingle.db "SELECT DISTINCT DOFS_primal, Estimated_Error_remainder from data "' u 1:2 w  lp lw 2 title '$\small|\eta^{(2)}_\mathcal{R}|$',\
		'< sqlite3 Data/dataSingle.db "SELECT DISTINCT DOFS_primal, abs(ErrorTotalEstimation) from data "' u 1:2 w  lp lw 2 title '$\small\eta^{(2)}$',\
		'< sqlite3 Data/dataSingle.db "SELECT DISTINCT DOFS_primal, 0.5*abs(Estimated_Error_adjoint+Estimated_Error_primal) from data "' u 1:2 w  lp lw 2 title '$\small\eta^{(2)}_h$',\
		1/x  dt 3 lw  4
		#0.1/sqrt(x)  lw 4,\
		0.1/(x*sqrt(x))  lw 4,
		# '< sqlite3 dataSingle.db "SELECT DISTINCT DOFS_primal, Exact_Error from data WHERE DOFS_primal <= 90000"' u 1:2 w lp title 'Exact Error',\
		'< sqlite3 dataSingle.db "SELECT DISTINCT  DOFS_primal, Estimated_Error from data"' u 1:2 w lp title 'Estimated Error',\
		'< sqlite3 dataSingle.db "SELECT DISTINCT  DOFS_primal, Estimated_Error_primal from data"' u 1:2 w lp title 'Estimated Error(primal)',\
		'< sqlite3 dataSingle.db "SELECT DISTINCT  DOFS_primal, Estimated_Error_adjoint from data"' u 1:2 w lp title 'Estimated(adjoint)',\
		'< sqlite3 Data/dataSingle.db "SELECT DISTINCT DOFS_primal, abs(ErrorTotalEstimation)+abs(abs(\"Juh2-Juh\") + Exact_Error) from data "' u 1:2 w  lp lw 2 title '$\small\eta^{(2)}$',\
		'< sqlite3 Data/dataSingle.db "SELECT DISTINCT DOFS_primal, abs(ErrorTotalEstimation)-abs(\"Juh2-Juh\"+ abs(Exact_Error)) from data "' u 1:2 w  lp lw 2 title '$\small\eta^{(2)}$',\
	\end{gnuplot}
	\fi
	\scalebox{1.0}{\input{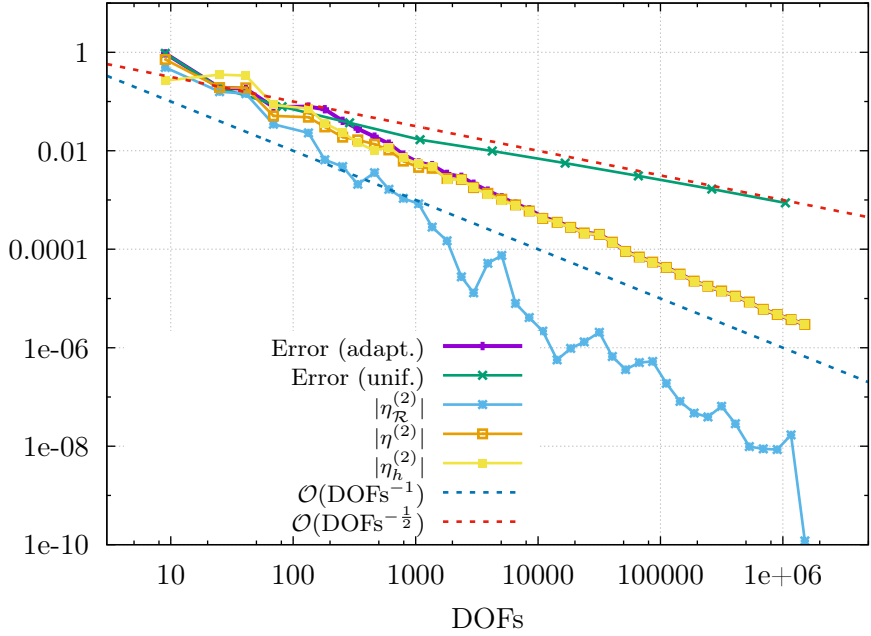}}
	\captionof{figure}{ Error vs DOFs for $p=4$, $\varepsilon=10^{-10}$. Error (unif.) describes the error for uniform refinement in $J(u)$ and Error (adapt.) for adaptive refinement.}\label{figure: Example1Errors}
\end{figure}
		
		\begin{minipage}[t]{0.45 \textwidth}
				\ifMAKEPICS
				\begin{gnuplot}[terminal=epslatex]
					set output "Figures/Example1cCase1gnuplot_archive.tex"
					set datafile separator "|"
					set logscale x
					set yrange [0:2.5]
					set grid ytics lc rgb "#bbbbbb" lw 1 lt 0
					set grid xtics lc rgb "#bbbbbb" lw 1 lt 0
					set xlabel '\text{DOFs}'
					set ylabel 'effectivity indices'
					set format '%g'
					plot  '< sqlite3 Data/dataSingle.db "SELECT DISTINCT DOFS_primal, Ieff from data "' u 1:2 w  lp lw 2 title '$I_{eff,\gamma}$',\
					'< sqlite3 Data/dataSingle.db "SELECT DISTINCT DOFS_primal, abs(IeffRL) from data "' u 1:2 w  lp lw 2 title '$I_{eff}$',\
					'< sqlite3 Data/dataSingle.db "SELECT DISTINCT DOFS_primal, abs(Ieff_primal) from data "' u 1:2 w  lp lw 2 title '$I_{eff,p}$',\
					'< sqlite3 Data/dataSingle.db "SELECT DISTINCT DOFS_primal, abs(Ieff_adjoint) from data "' u 1:2 w  lp lw 2 title '$I_{eff,a}$',\
					1  lw 4
					# '< sqlite3 dataSingle.db "SELECT DISTINCT DOFS_primal, Exact_Error from data WHERE DOFS_primal <= 90000"' u 1:2 w lp title 'Exact Error',\
					'< sqlite3 dataSingle.db "SELECT DISTINCT  DOFS_primal, Estimated_Error from data"' u 1:2 w lp title 'Estimated Error',\
					'< sqlite3 dataSingle.db "SELECT DISTINCT  DOFS_primal, Estimated_Error_primal from data"' u 1:2 w lp title 'Estimated Error(primal)',\
					'< sqlite3 dataSingle.db "SELECT DISTINCT  DOFS_primal, Estimated_Error_adjoint from data"' u 1:2 w lp title 'Estimated(adjoint)',\				
				\end{gnuplot}
				\fi
				\scalebox{0.65}{% GNUPLOT: LaTeX picture with Postscript
\begingroup
  \makeatletter
  \providecommand\color[2][]{%
    \GenericError{(gnuplot) \space\space\space\@spaces}{%
      Package color not loaded in conjunction with
      terminal option `colourtext'%
    }{See the gnuplot documentation for explanation.%
    }{Either use 'blacktext' in gnuplot or load the package
      color.sty in LaTeX.}%
    \renewcommand\color[2][]{}%
  }%
  \providecommand\includegraphics[2][]{%
    \GenericError{(gnuplot) \space\space\space\@spaces}{%
      Package graphicx or graphics not loaded%
    }{See the gnuplot documentation for explanation.%
    }{The gnuplot epslatex terminal needs graphicx.sty or graphics.sty.}%
    \renewcommand\includegraphics[2][]{}%
  }%
  \providecommand\rotatebox[2]{#2}%
  \@ifundefined{ifGPcolor}{%
    \newif\ifGPcolor
    \GPcolorfalse
  }{}%
  \@ifundefined{ifGPblacktext}{%
    \newif\ifGPblacktext
    \GPblacktexttrue
  }{}%
  % define a \g@addto@macro without @ in the name:
  \let\gplgaddtomacro\g@addto@macro
  % define empty templates for all commands taking text:
  \gdef\gplbacktext{}%
  \gdef\gplfronttext{}%
  \makeatother
  \ifGPblacktext
    % no textcolor at all
    \def\colorrgb#1{}%
    \def\colorgray#1{}%
  \else
    % gray or color?
    \ifGPcolor
      \def\colorrgb#1{\color[rgb]{#1}}%
      \def\colorgray#1{\color[gray]{#1}}%
      \expandafter\def\csname LTw\endcsname{\color{white}}%
      \expandafter\def\csname LTb\endcsname{\color{black}}%
      \expandafter\def\csname LTa\endcsname{\color{black}}%
      \expandafter\def\csname LT0\endcsname{\color[rgb]{1,0,0}}%
      \expandafter\def\csname LT1\endcsname{\color[rgb]{0,1,0}}%
      \expandafter\def\csname LT2\endcsname{\color[rgb]{0,0,1}}%
      \expandafter\def\csname LT3\endcsname{\color[rgb]{1,0,1}}%
      \expandafter\def\csname LT4\endcsname{\color[rgb]{0,1,1}}%
      \expandafter\def\csname LT5\endcsname{\color[rgb]{1,1,0}}%
      \expandafter\def\csname LT6\endcsname{\color[rgb]{0,0,0}}%
      \expandafter\def\csname LT7\endcsname{\color[rgb]{1,0.3,0}}%
      \expandafter\def\csname LT8\endcsname{\color[rgb]{0.5,0.5,0.5}}%
    \else
      % gray
      \def\colorrgb#1{\color{black}}%
      \def\colorgray#1{\color[gray]{#1}}%
      \expandafter\def\csname LTw\endcsname{\color{white}}%
      \expandafter\def\csname LTb\endcsname{\color{black}}%
      \expandafter\def\csname LTa\endcsname{\color{black}}%
      \expandafter\def\csname LT0\endcsname{\color{black}}%
      \expandafter\def\csname LT1\endcsname{\color{black}}%
      \expandafter\def\csname LT2\endcsname{\color{black}}%
      \expandafter\def\csname LT3\endcsname{\color{black}}%
      \expandafter\def\csname LT4\endcsname{\color{black}}%
      \expandafter\def\csname LT5\endcsname{\color{black}}%
      \expandafter\def\csname LT6\endcsname{\color{black}}%
      \expandafter\def\csname LT7\endcsname{\color{black}}%
      \expandafter\def\csname LT8\endcsname{\color{black}}%
    \fi
  \fi
    \setlength{\unitlength}{0.0500bp}%
    \ifx\gptboxheight\undefined%
      \newlength{\gptboxheight}%
      \newlength{\gptboxwidth}%
      \newsavebox{\gptboxtext}%
    \fi%
    \setlength{\fboxrule}{0.5pt}%
    \setlength{\fboxsep}{1pt}%
\begin{picture}(7200.00,5040.00)%
    \gplgaddtomacro\gplbacktext{%
      \csname LTb\endcsname%
      \put(814,704){\makebox(0,0)[r]{\strut{}0}}%
      \csname LTb\endcsname%
      \put(814,1518){\makebox(0,0)[r]{\strut{}0.5}}%
      \csname LTb\endcsname%
      \put(814,2332){\makebox(0,0)[r]{\strut{}1}}%
      \csname LTb\endcsname%
      \put(814,3147){\makebox(0,0)[r]{\strut{}1.5}}%
      \csname LTb\endcsname%
      \put(814,3961){\makebox(0,0)[r]{\strut{}2}}%
      \csname LTb\endcsname%
      \put(814,4775){\makebox(0,0)[r]{\strut{}2.5}}%
      \csname LTb\endcsname%
      \put(946,484){\makebox(0,0){\strut{}1}}%
      \csname LTb\endcsname%
      \put(1783,484){\makebox(0,0){\strut{}10}}%
      \csname LTb\endcsname%
      \put(2619,484){\makebox(0,0){\strut{}100}}%
      \csname LTb\endcsname%
      \put(3456,484){\makebox(0,0){\strut{}1000}}%
      \csname LTb\endcsname%
      \put(4293,484){\makebox(0,0){\strut{}10000}}%
      \csname LTb\endcsname%
      \put(5130,484){\makebox(0,0){\strut{}100000}}%
      \csname LTb\endcsname%
      \put(5966,484){\makebox(0,0){\strut{}1e+06}}%
      \csname LTb\endcsname%
      \put(6803,484){\makebox(0,0){\strut{}1e+07}}%
    }%
    \gplgaddtomacro\gplfronttext{%
      \csname LTb\endcsname%
      \put(176,2739){\rotatebox{-270}{\makebox(0,0){\strut{}effectivity indices}}}%
      \put(3874,154){\makebox(0,0){\strut{}\text{DOFs}}}%
      \csname LTb\endcsname%
      \put(5816,4602){\makebox(0,0)[r]{\strut{}$I_{eff,\gamma}$}}%
      \csname LTb\endcsname%
      \put(5816,4382){\makebox(0,0)[r]{\strut{}$I_{eff}$}}%
      \csname LTb\endcsname%
      \put(5816,4162){\makebox(0,0)[r]{\strut{}$I_{eff,p}$}}%
      \csname LTb\endcsname%
      \put(5816,3942){\makebox(0,0)[r]{\strut{}$I_{eff,a}$}}%
      \csname LTb\endcsname%
      \put(5816,3722){\makebox(0,0)[r]{\strut{}1}}%
    }%
    \gplbacktext
    \put(0,0){\includegraphics{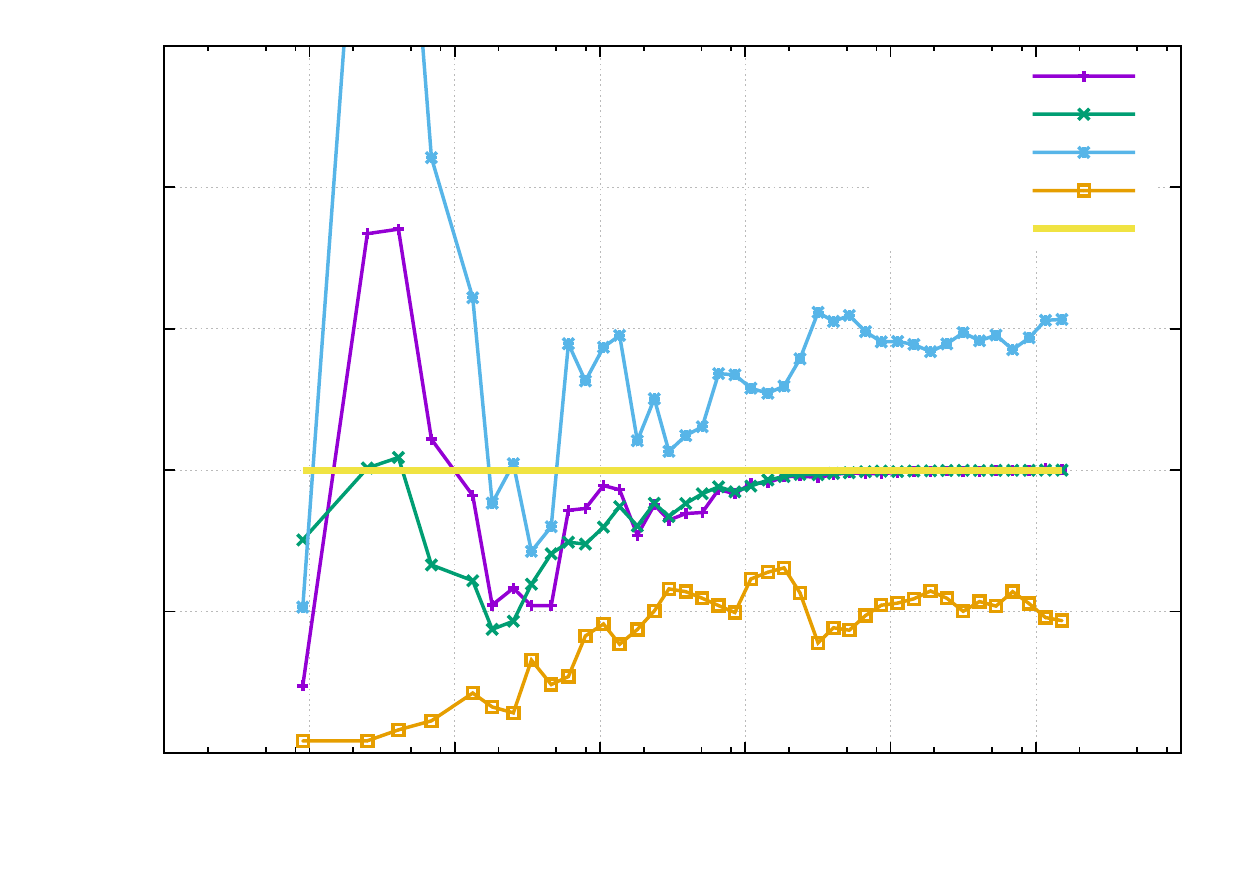}}%
    \gplfronttext
  \end{picture}%
\endgroup
}
				\captionof{figure}{Effectivity indices for adaptive refinement}\label{figure: Example1adaptiveIeff}			
		\end{minipage} %
		\hfill
			\begin{minipage}[t]{0.45 \textwidth}
			\ifMAKEPICS
			\begin{gnuplot}[terminal=epslatex]
				set output "Figures/Example2_archive.tex"
				set datafile separator "|"
				set logscale x
				set yrange [0:2.5]
				set grid ytics lc rgb "#bbbbbb" lw 1 lt 0
				set grid xtics lc rgb "#bbbbbb" lw 1 lt 0
				set xlabel '\text{DOFs}'
				set ylabel 'effectivity indices'
				set format '%g'
				plot  '< sqlite3 Data/dataSinglegoal.db "SELECT DISTINCT DOFS_primal, Ieff from data_global "' u 1:2 w  lp lw 2 title '$I_{eff,\gamma}$',\
					'< sqlite3 Data/dataSinglegoal.db "SELECT DISTINCT DOFS_primal, abs(IeffRL) from data_global "' u 1:2 w  lp lw 2 title '$I_{eff}$',\
					'< sqlite3 Data/dataSinglegoal.db "SELECT DISTINCT DOFS_primal, abs(Ieff_primal) from data_global "' u 1:2 w  lp lw 2 title '$I_{eff,p}$',\
					'< sqlite3 Data/dataSinglegoal.db "SELECT DISTINCT DOFS_primal, abs(Ieff_adjoint) from data_global "' u 1:2 w  lp lw 2 title '$I_{eff,a}$',\
				1  lw 4 
			\end{gnuplot}
			\fi
			\scalebox{0.65}{% GNUPLOT: LaTeX picture with Postscript
\begingroup
  \makeatletter
  \providecommand\color[2][]{%
    \GenericError{(gnuplot) \space\space\space\@spaces}{%
      Package color not loaded in conjunction with
      terminal option `colourtext'%
    }{See the gnuplot documentation for explanation.%
    }{Either use 'blacktext' in gnuplot or load the package
      color.sty in LaTeX.}%
    \renewcommand\color[2][]{}%
  }%
  \providecommand\includegraphics[2][]{%
    \GenericError{(gnuplot) \space\space\space\@spaces}{%
      Package graphicx or graphics not loaded%
    }{See the gnuplot documentation for explanation.%
    }{The gnuplot epslatex terminal needs graphicx.sty or graphics.sty.}%
    \renewcommand\includegraphics[2][]{}%
  }%
  \providecommand\rotatebox[2]{#2}%
  \@ifundefined{ifGPcolor}{%
    \newif\ifGPcolor
    \GPcolorfalse
  }{}%
  \@ifundefined{ifGPblacktext}{%
    \newif\ifGPblacktext
    \GPblacktexttrue
  }{}%
  % define a \g@addto@macro without @ in the name:
  \let\gplgaddtomacro\g@addto@macro
  % define empty templates for all commands taking text:
  \gdef\gplbacktext{}%
  \gdef\gplfronttext{}%
  \makeatother
  \ifGPblacktext
    % no textcolor at all
    \def\colorrgb#1{}%
    \def\colorgray#1{}%
  \else
    % gray or color?
    \ifGPcolor
      \def\colorrgb#1{\color[rgb]{#1}}%
      \def\colorgray#1{\color[gray]{#1}}%
      \expandafter\def\csname LTw\endcsname{\color{white}}%
      \expandafter\def\csname LTb\endcsname{\color{black}}%
      \expandafter\def\csname LTa\endcsname{\color{black}}%
      \expandafter\def\csname LT0\endcsname{\color[rgb]{1,0,0}}%
      \expandafter\def\csname LT1\endcsname{\color[rgb]{0,1,0}}%
      \expandafter\def\csname LT2\endcsname{\color[rgb]{0,0,1}}%
      \expandafter\def\csname LT3\endcsname{\color[rgb]{1,0,1}}%
      \expandafter\def\csname LT4\endcsname{\color[rgb]{0,1,1}}%
      \expandafter\def\csname LT5\endcsname{\color[rgb]{1,1,0}}%
      \expandafter\def\csname LT6\endcsname{\color[rgb]{0,0,0}}%
      \expandafter\def\csname LT7\endcsname{\color[rgb]{1,0.3,0}}%
      \expandafter\def\csname LT8\endcsname{\color[rgb]{0.5,0.5,0.5}}%
    \else
      % gray
      \def\colorrgb#1{\color{black}}%
      \def\colorgray#1{\color[gray]{#1}}%
      \expandafter\def\csname LTw\endcsname{\color{white}}%
      \expandafter\def\csname LTb\endcsname{\color{black}}%
      \expandafter\def\csname LTa\endcsname{\color{black}}%
      \expandafter\def\csname LT0\endcsname{\color{black}}%
      \expandafter\def\csname LT1\endcsname{\color{black}}%
      \expandafter\def\csname LT2\endcsname{\color{black}}%
      \expandafter\def\csname LT3\endcsname{\color{black}}%
      \expandafter\def\csname LT4\endcsname{\color{black}}%
      \expandafter\def\csname LT5\endcsname{\color{black}}%
      \expandafter\def\csname LT6\endcsname{\color{black}}%
      \expandafter\def\csname LT7\endcsname{\color{black}}%
      \expandafter\def\csname LT8\endcsname{\color{black}}%
    \fi
  \fi
    \setlength{\unitlength}{0.0500bp}%
    \ifx\gptboxheight\undefined%
      \newlength{\gptboxheight}%
      \newlength{\gptboxwidth}%
      \newsavebox{\gptboxtext}%
    \fi%
    \setlength{\fboxrule}{0.5pt}%
    \setlength{\fboxsep}{1pt}%
\begin{picture}(7200.00,5040.00)%
    \gplgaddtomacro\gplbacktext{%
      \csname LTb\endcsname%
      \put(814,704){\makebox(0,0)[r]{\strut{}0}}%
      \csname LTb\endcsname%
      \put(814,1518){\makebox(0,0)[r]{\strut{}0.5}}%
      \csname LTb\endcsname%
      \put(814,2332){\makebox(0,0)[r]{\strut{}1}}%
      \csname LTb\endcsname%
      \put(814,3147){\makebox(0,0)[r]{\strut{}1.5}}%
      \csname LTb\endcsname%
      \put(814,3961){\makebox(0,0)[r]{\strut{}2}}%
      \csname LTb\endcsname%
      \put(814,4775){\makebox(0,0)[r]{\strut{}2.5}}%
      \csname LTb\endcsname%
      \put(946,484){\makebox(0,0){\strut{}1}}%
      \csname LTb\endcsname%
      \put(1783,484){\makebox(0,0){\strut{}10}}%
      \csname LTb\endcsname%
      \put(2619,484){\makebox(0,0){\strut{}100}}%
      \csname LTb\endcsname%
      \put(3456,484){\makebox(0,0){\strut{}1000}}%
      \csname LTb\endcsname%
      \put(4293,484){\makebox(0,0){\strut{}10000}}%
      \csname LTb\endcsname%
      \put(5130,484){\makebox(0,0){\strut{}100000}}%
      \csname LTb\endcsname%
      \put(5966,484){\makebox(0,0){\strut{}1e+06}}%
      \csname LTb\endcsname%
      \put(6803,484){\makebox(0,0){\strut{}1e+07}}%
    }%
    \gplgaddtomacro\gplfronttext{%
      \csname LTb\endcsname%
      \put(176,2739){\rotatebox{-270}{\makebox(0,0){\strut{}effectivity indices}}}%
      \put(3874,154){\makebox(0,0){\strut{}\text{DOFs}}}%
      \csname LTb\endcsname%
      \put(5816,4602){\makebox(0,0)[r]{\strut{}$I_{eff,\gamma}$}}%
      \csname LTb\endcsname%
      \put(5816,4382){\makebox(0,0)[r]{\strut{}$I_{eff}$}}%
      \csname LTb\endcsname%
      \put(5816,4162){\makebox(0,0)[r]{\strut{}$I_{eff,p}$}}%
      \csname LTb\endcsname%
      \put(5816,3942){\makebox(0,0)[r]{\strut{}$I_{eff,a}$}}%
      \csname LTb\endcsname%
      \put(5816,3722){\makebox(0,0)[r]{\strut{}1}}%
    }%
    \gplbacktext
    \put(0,0){\includegraphics{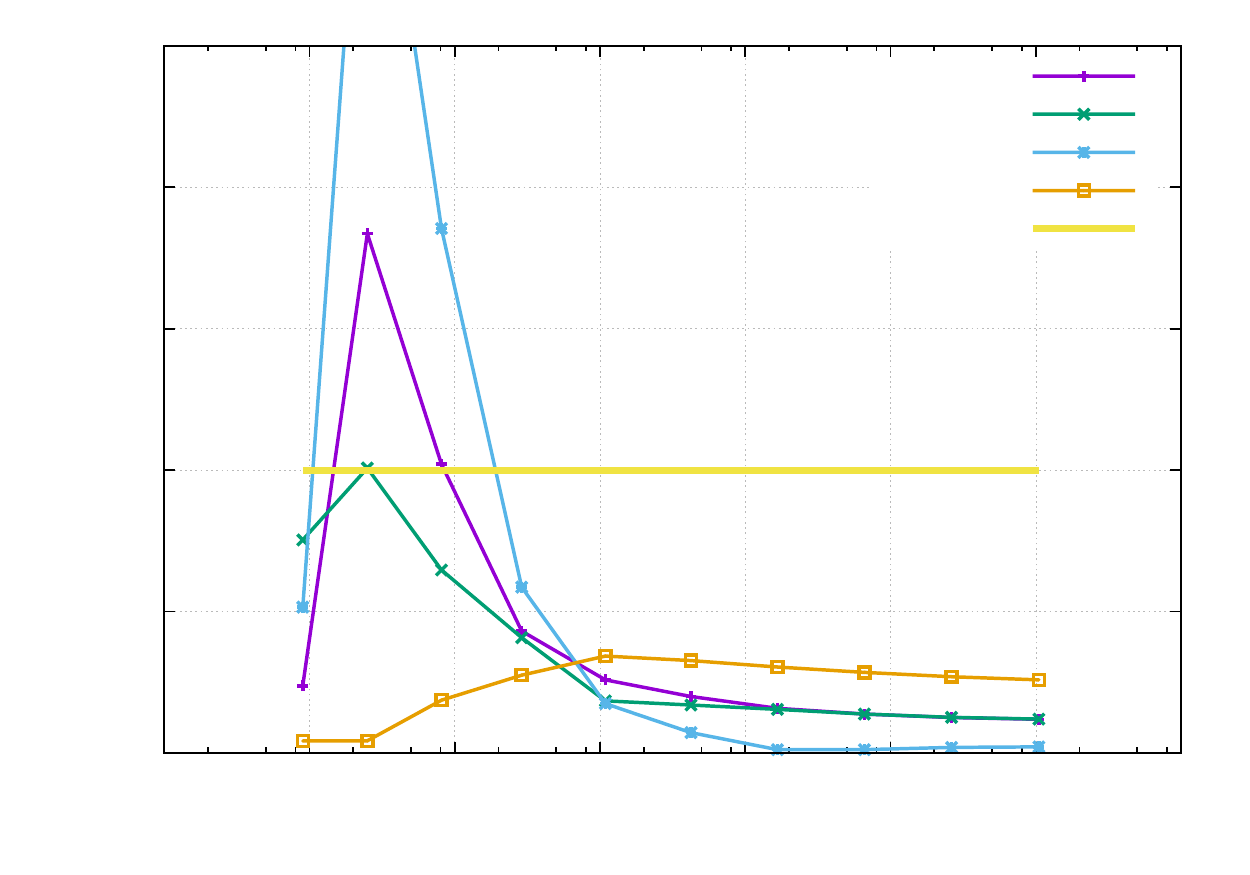}}%
    \gplfronttext
  \end{picture}%
\endgroup
}
			\captionof{figure}{Effectivity indices for uniform refinement}\label{figure: Example1uniformIeff}						
		\end{minipage}
				\begin{figure}[p]
					\includegraphics[width=0.333\textwidth]{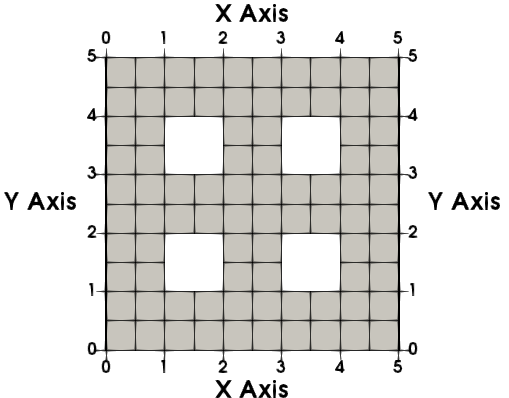}			
					\includegraphics[width=0.333\textwidth]{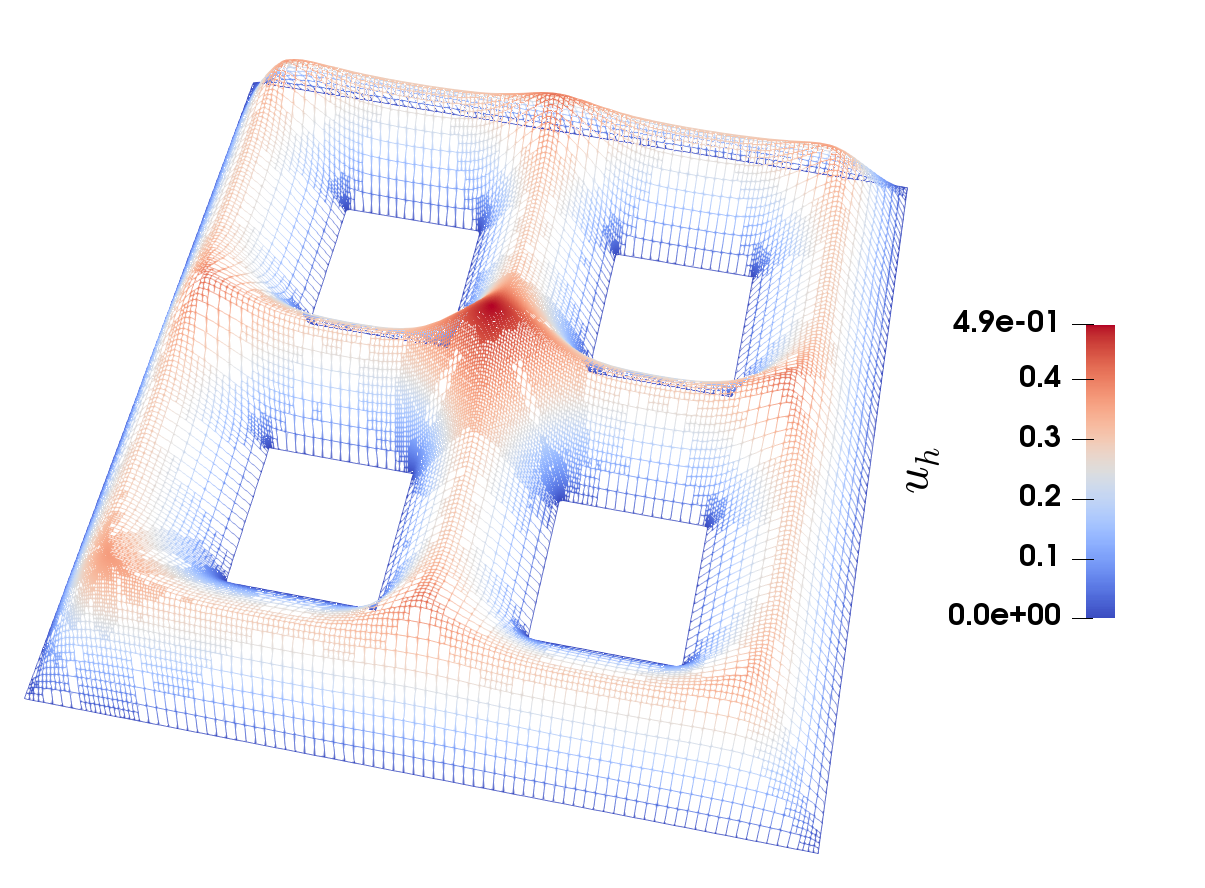}
					\includegraphics[width=0.333\textwidth]{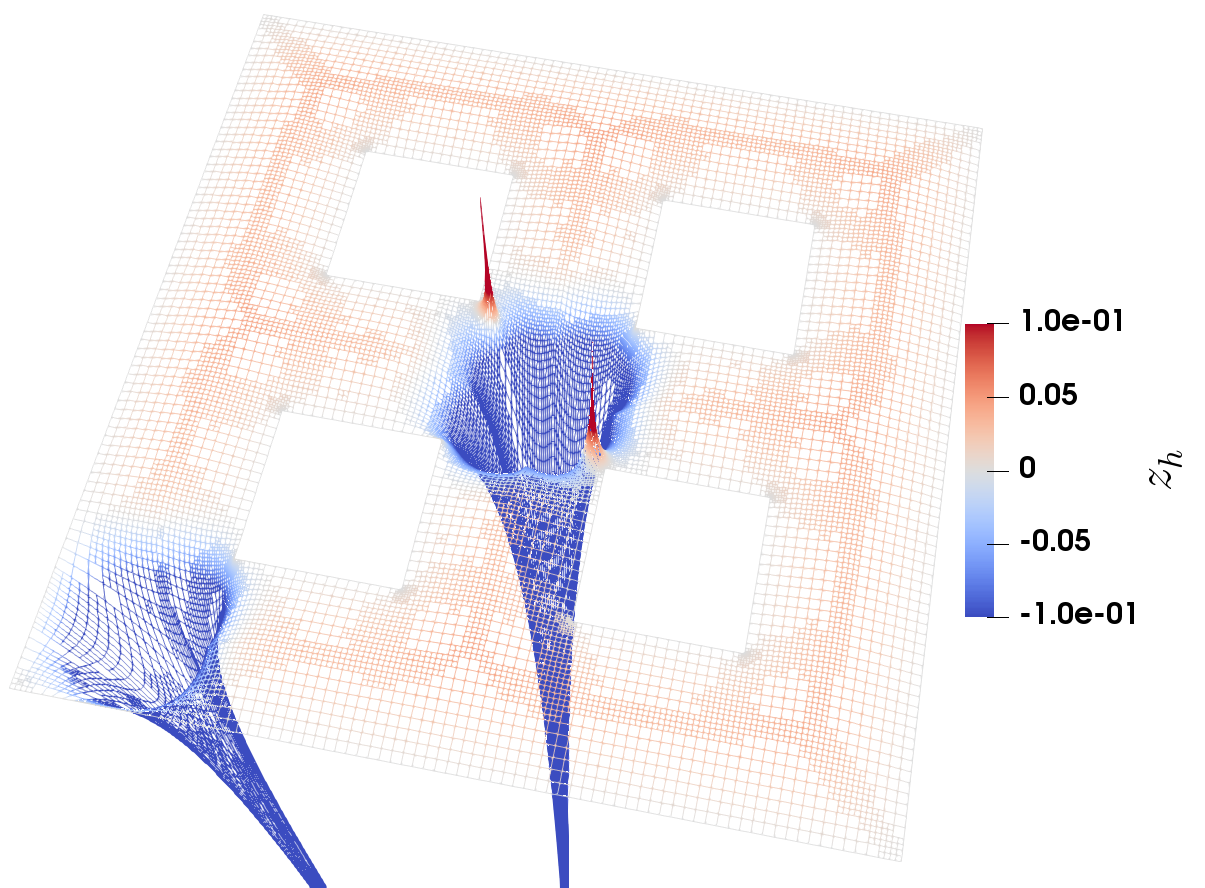}
					\caption{Initial mesh (left), the primal solution (middle) and adjoint solution (right)
						%					 and the adjoint solution on a 3d-primal plot(middle) 
						on the mesh achieved on level $l$=22 ($24$ $532$ DOFs).} \label{figure: multigoal + illustration}
				\end{figure}%
				
				\begin{figure}[p]
					\begin{minipage}[t]{0.45 \textwidth}
						\ifMAKEPICS
						\begin{gnuplot}[terminal=epslatex]
							set output "Figures/Example4_archive.tex"
							set key left
							set datafile separator "|"
							set logscale x
							set yrange [0:2.5]
							set xrange [95:100000]
							set grid ytics lc rgb "#bbbbbb" lw 1 lt 0
							set grid xtics lc rgb "#bbbbbb" lw 1 lt 0
							set xlabel '\text{DOFs}'
							set ylabel 'effectivity indices'
							set format '%g'
							plot  '< sqlite3 Data/multigoal/dataMultigoal.db "SELECT DISTINCT DOFS_primal, Ieff from data WHERE DOFS_primal <= 90000 "' u 1:2 w  lp lw 2 title '$I_{eff,\gamma}$',\
							'< sqlite3 Data/multigoal/dataMultigoal.db "SELECT DISTINCT DOFS_primal, abs(IeffRL) from data WHERE DOFS_primal <= 90000 "' u 1:2 w  lp lw 2 title '$I_{eff}$',\
							'< sqlite3 Data/multigoal/dataMultigoal.db "SELECT DISTINCT DOFS_primal, abs(Ieff_primal) from data WHERE DOFS_primal <= 90000 "' u 1:2 w  lp lw 2 title '$I_{eff,p}$',\
							'< sqlite3 Data/multigoal/dataMultigoal.db "SELECT DISTINCT DOFS_primal, abs(Ieff_adjoint) from data WHERE DOFS_primal <= 90000 "' u 1:2 w  lp lw 2 title '$I_{eff,a}$',\
							1  lw 4
							# '< sqlite3 multigoal/dataMultigoal.db "SELECT DISTINCT DOFS_primal, Exact_Error from data WHERE DOFS_primal <= 90000 WHERE DOFS_primal <= 90000"' u 1:2 w lp title 'Exact Error',\
							'< sqlite3 multigoal/dataMultigoal.db "SELECT DISTINCT  DOFS_primal, Estimated_Error from data WHERE DOFS_primal <= 90000"' u 1:2 w lp title 'Estimated Error',\
							'< sqlite3 multigoal/dataMultigoal.db "SELECT DISTINCT  DOFS_primal, Estimated_Error_primal from data WHERE DOFS_primal <= 90000"' u 1:2 w lp title 'Estimated Error(primal)',\
							'< sqlite3 multigoal/dataMultigoal.db "SELECT DISTINCT  DOFS_primal, Estimated_Error_adjoint from data WHERE DOFS_primal <= 90000"' u 1:2 w lp title 'Estimated(adjoint)',\				
						\end{gnuplot}
						\fi
						\scalebox{0.65}{\input{Figures/Example4_archive.tex}}
						\captionof{figure}{Effectivity indices for adaptive refinement for multiple goals.}\label{Figure: multigoal + Ieffs}
					\end{minipage} %
					\hfill
					\begin{minipage}[t]{0.45 \textwidth}

						\ifMAKEPICS
						\begin{gnuplot}[terminal=epslatex]
							set output "Figures/Example8_archive.tex"
							set key bottom left
							set key opaque
							set datafile separator "|"
							set logscale x
							set logscale y
							set yrange [1e-6:1]
							set grid ytics lc rgb "#bbbbbb" lw 1 lt 0
							set grid xtics lc rgb "#bbbbbb" lw 1 lt 0
							set xlabel '\text{DOFs}'
							set format '%g'
							plot '< sqlite3 Data/multigoal/dataMultigoalglobal.db "SELECT DISTINCT DOFS_primal, relativeError0  from data_global WHERE DOFS_primal<= 90000"' u 1:2 w  lp lw 2 title '$J_1$(uniform)',\
							'< sqlite3 Data/multigoal/dataMultigoalglobal.db "SELECT DISTINCT DOFS_primal, relativeError1  from data_global WHERE DOFS_primal <= 90000"' u 1:2 w  lp lw 2 title '$J_2$(uniform)',\
							'< sqlite3 Data/multigoal/dataMultigoalglobal.db "SELECT DISTINCT DOFS_primal, relativeError2  from data_global WHERE DOFS_primal <= 90000"' u 1:2 w  lp lw 2 title '$J_3$(uniform)',\
							'< sqlite3 Data/multigoal/dataMultigoalglobal.db "SELECT DISTINCT DOFS_primal, relativeError3  from data_global WHERE DOFS_primal<= 90000"' u 1:2 w  lp lw 2 title '$J_4$(uniform)',\
							'< sqlite3 Data/multigoal/dataMultigoalglobal.db "SELECT DISTINCT DOFS_primal, Exact_Error  from data_global WHERE DOFS_primal <= 90000"' u 1:2 w  lp lw 2 linecolor "red" title '$J_\mathfrak{E}$(uniform)',\
							1.5*x**(-0.6) lw 4 dashtype 2  title '$\mathcal{O}(\text{DOFs}^{-\frac{3}{5}})$',\
							5/x   lw 4 dashtype 2  title '$\mathcal{O}(\text{DOFs}^{-1})$'
							#plot  '< sqlite3 Data/multigoal/dataMultigoal.db "SELECT DISTINCT DOFS_primal, Exact_Error from data WHERE DOFS_primal <= 90000 "' u 1:2 w  lp lw 3 title ' \small $|J(u)-J(u_h)|$ (a)',\
							'< sqlite3 Data/dataSinglegoal.db "SELECT DISTINCT DOFS_primal, Exact_Error from data WHERE DOFS_primal <= 90000_global "' u 1:2 w  lp lw 2 title ' \small $|J(u)-J(u_h)|$ (u)',\
							'< sqlite3 Data/multigoal/dataMultigoal.db "SELECT DISTINCT DOFS_primal, Estimated_Error_remainder from data WHERE DOFS_primal <= 90000 "' u 1:2 w  lp lw 2 title '$\small|\eta^{(2)}_\mathcal{R}|$',\
							'< sqlite3 Data/multigoal/dataMultigoal.db "SELECT DISTINCT DOFS_primal, abs(ErrorTotalEstimation) from data WHERE DOFS_primal <= 90000 "' u 1:2 w  lp lw 2 title '$\small\eta^{(2)}$',\
							'< sqlite3 Data/multigoal/dataMultigoal.db "SELECT DISTINCT DOFS_primal, 0.5*abs(Estimated_Error_adjoint+Estimated_Error_primal) from data WHERE DOFS_primal <= 90000 "' u 1:2 w  lp lw 2 title '$\small\eta^{(2)}_h$',\
							1/x  dt 3 lw  4
							#0.1/sqrt(x)  lw 4,\
							0.1/(x*sqrt(x))  lw 4,
							# '< sqlite3 multigoal/dataMultigoal.db "SELECT DISTINCT DOFS_primal, Exact_Error from data WHERE DOFS_primal <= 90000 WHERE DOFS_primal <= 90000"' u 1:2 w lp title 'Exact Error',\
							'< sqlite3 multigoal/dataMultigoal.db "SELECT DISTINCT  DOFS_primal, Estimated_Error from data WHERE DOFS_primal <= 90000"' u 1:2 w lp title 'Estimated Error',\
							'< sqlite3 multigoal/dataMultigoal.db "SELECT DISTINCT  DOFS_primal, Estimated_Error_primal from data WHERE DOFS_primal <= 90000"' u 1:2 w lp title 'Estimated Error(primal)',\
							'< sqlite3 multigoal/dataMultigoal.db "SELECT DISTINCT  DOFS_primal, Estimated_Error_adjoint from data WHERE DOFS_primal <= 90000"' u 1:2 w lp title 'Estimated(adjoint)',\
							'< sqlite3 Data/multigoal/dataMultigoal.db "SELECT DISTINCT DOFS_primal, abs(ErrorTotalEstimation)+abs(abs(\"Juh2-Juh\") + Exact_Error) from data WHERE DOFS_primal <= 90000 "' u 1:2 w  lp lw 2 title '$\small\eta^{(2)}$',\
							'< sqlite3 Data/multigoal/dataMultigoal.db "SELECT DISTINCT DOFS_primal, abs(ErrorTotalEstimation)-abs(\"Juh2-Juh\"+ abs(Exact_Error)) from data WHERE DOFS_primal <= 90000 "' u 1:2 w  lp lw 2 title '$\small\eta^{(2)}$',\
						\end{gnuplot}
						\fi
						\scalebox{0.65}{\input{Figures/Example8_archive.tex}}
						\captionof{figure}{Error reduction for the single functionals using uniform refinement.}\label{Figure: multigoal + uniformerrors}
					\end{minipage}

					\begin{minipage}[t]{0.45 \textwidth}
						\ifMAKEPICS
						\begin{gnuplot}[terminal=epslatex]
							set output "Figures/Examplewo6_archive.tex"
							set key bottom left
							set key opaque
							set datafile separator "|"
							set logscale x
							set logscale y
							set yrange [1e-9:1]
							set grid ytics lc rgb "#bbbbbb" lw 1 lt 0
							set grid xtics lc rgb "#bbbbbb" lw 1 lt 0
							set xlabel '\text{DOFs}'
							set format '%g'
							plot  '< sqlite3 Data/multigoal/EstimatesremainderFull/Multigoalremainder.db "SELECT DISTINCT DOFS_primal, Exact_Error from data WHERE DOFS_primal <= 90000 "' u 1:2 w  lp lw 3 title ' \footnotesize Error (adapt.)',\
							'< sqlite3 Data/multigoal/dataMultigoalglobal.db "SELECT DISTINCT DOFS_primal, Exact_Error  from data_global WHERE DOFS_primal <= 90000"' u 1:2 w  lp lw 3 linecolor "green" title '\footnotesize Error (unif.)',\
							'< sqlite3 Data/dataSinglegoal.db "SELECT DISTINCT DOFS_primal, Exact_Error from data WHERE DOFS_primal <= 90000_global "' u 1:2 w  lp lw 2 title ' \footnotesize Error (u)',\
							'< sqlite3 Data/multigoal/EstimatesremainderFull/Multigoalremainder.db "SELECT DISTINCT DOFS_primal, Estimated_Error_remainder from data WHERE DOFS_primal <= 90000 "' u 1:2 w  lp lw 2 title '\scriptsize$|\eta^{(2)}_\mathcal{R}|$',\
							'< sqlite3 Data/multigoal/EstimatesremainderFull/Multigoalremainder.db "SELECT DISTINCT DOFS_primal, abs(ErrorTotalEstimation) from data WHERE DOFS_primal <= 90000 "' u 1:2 w  lp lw 2 linecolor "blue" title '\scriptsize$|\eta^{(2)}|$',\
							'< sqlite3 Data/multigoal/EstimatesremainderFull/Multigoalremainder.db "SELECT DISTINCT DOFS_primal, 0.5*abs(Estimated_Error_adjoint+Estimated_Error_primal) from data WHERE DOFS_primal <= 90000 "' u 1:2 w  lp lw 2 linecolor "yellow" title '\scriptsize$|\eta^{(2)}_h|$',\
							10/x  dt 3 lw  4 title '\footnotesize$\mathcal{O}(\text{DOFs}^{-1})$',\
							10/(x**1.5)  dt 3 lw  4 title '\footnotesize$\mathcal{O}(\text{DOFs}^{-\frac{3}{2}})$'
							#plot  '< sqlite3 Data/multigoal/EstimatesremainderFull/Multigoalremainder.db "SELECT DISTINCT DOFS_primal, Exact_Error from data WHERE DOFS_primal <= 90000 "' u 1:2 w  lp lw 3 title ' \small $|J(u)-J(u_h)|$ (a)',\
							'< sqlite3 Data/dataSinglegoal.db "SELECT DISTINCT DOFS_primal, Exact_Error from data WHERE DOFS_primal <= 90000_global "' u 1:2 w  lp lw 2 title ' \small $|J(u)-J(u_h)|$ (u)',\
							'< sqlite3 Data/multigoal/EstimatesremainderFull/Multigoalremainder.db "SELECT DISTINCT DOFS_primal, Estimated_Error_remainder from data WHERE DOFS_primal <= 90000 "' u 1:2 w  lp lw 2 title '$\small|\eta^{(2)}_\mathcal{R}|$',\
							'< sqlite3 Data/multigoal/EstimateswithRemainder/dataMultigoal.db "SELECT DISTINCT DOFS_primal, abs(ErrorTotalEstimation) from data WHERE DOFS_primal <= 90000 "' u 1:2 w  lp lw 2 title '$\small\eta^{(2)}$',\
							'< sqlite3 Data/multigoal/EstimateswithRemainder/dataMultigoal.db "SELECT DISTINCT DOFS_primal, 0.5*abs(Estimated_Error_adjoint+Estimated_Error_primal) from data WHERE DOFS_primal <= 90000 "' u 1:2 w  lp lw 2 title '$\small\eta^{(2)}_\mathcal{R}$',\
							1/x  dt 3 lw  4
							#0.1/sqrt(x)  lw 4,\
							0.1/(x*sqrt(x))  lw 4,
							# '< sqlite3 multigoal/dataMultigoal.db "SELECT DISTINCT DOFS_primal, Exact_Error from data WHERE DOFS_primal <= 90000 WHERE DOFS_primal <= 90000"' u 1:2 w lp title 'Exact Error',\
							'< sqlite3 multigoal/dataMultigoal.db "SELECT DISTINCT  DOFS_primal, Estimated_Error from data WHERE DOFS_primal <= 90000"' u 1:2 w lp title 'Estimated Error',\
							'< sqlite3 multigoal/dataMultigoal.db "SELECT DISTINCT  DOFS_primal, Estimated_Error_primal from data WHERE DOFS_primal <= 90000"' u 1:2 w lp title 'Estimated Error(primal)',\
							'< sqlite3 multigoal/dataMultigoal.db "SELECT DISTINCT  DOFS_primal, Estimated_Error_adjoint from data WHERE DOFS_primal <= 90000"' u 1:2 w lp title 'Estimated(adjoint)',\
							'< sqlite3 Data/multigoal/EstimateswithRemainder/dataMultigoal.db "SELECT DISTINCT DOFS_primal, abs(ErrorTotalEstimation)+abs(abs(\"Juh2-Juh\") + Exact_Error) from data WHERE DOFS_primal <= 90000 "' u 1:2 w  lp lw 2 title '$\small\eta^{(2)}$',\
							'< sqlite3 Data/multigoal/EstimateswithRemainder/dataMultigoal.db "SELECT DISTINCT DOFS_primal, abs(ErrorTotalEstimation)-abs(\"Juh2-Juh\"+ abs(Exact_Error)) from data WHERE DOFS_primal <= 90000 "' u 1:2 w  lp lw 2 title '$\small\eta^{(2)}$',\
							'< sqlite3 Data/multigoal/EstimateswithRemainder/EstimateswithRemainder/dataMultigoal.db "SELECT DISTINCT DOFS_primal, Estimated_Error_remainder from data WHERE DOFS_primal <= 90000 "' u 1:2 w  lp lw 2 title '\scriptsize$|\eta^{(2)}_\mathcal{R}|$(with)',\
						\end{gnuplot}
						\fi
						\scalebox{0.65}{\input{Figures/Examplewo6_archive.tex}}
						\captionof{figure}{Comparison of the different error parts and the error in the uniform and adaptive case for the combined functional $J_\mathfrak{E}$.}\label{Figure: multigoal + errorestimatorpartscomparison}
					\end{minipage} %
					\hfill
					\begin{minipage}[t]{0.45 \textwidth}
						\ifMAKEPICS
						\begin{gnuplot}[terminal=epslatex]
							set output "Figures/Examplewo9_archive.tex"
							set key bottom left
							set key opaque
							set datafile separator "|"
							set logscale x
							set logscale y
							set yrange [1e-6:1]
							set grid ytics lc rgb "#bbbbbb" lw 1 lt 0
							set grid xtics lc rgb "#bbbbbb" lw 1 lt 0
							set xlabel '\text{DOFs}'
							set format '%g'
							plot '< sqlite3 Data/multigoal/EstimatesremainderFull/Multigoalremainder.db "SELECT DISTINCT DOFS_primal, relativeError0  from data WHERE DOFS_primal<= 90000"' u 1:2 w  lp lw 2 title '$J_1$(adaptive)',\
							'< sqlite3 Data/multigoal/EstimatesremainderFull/Multigoalremainder.db "SELECT DISTINCT DOFS_primal, relativeError1  from data WHERE DOFS_primal <= 90000"' u 1:2 w  lp lw 2 title '$J_2$(adaptive)',\
							'< sqlite3 Data/multigoal/EstimatesremainderFull/Multigoalremainder.db "SELECT DISTINCT DOFS_primal, relativeError2  from data WHERE DOFS_primal <= 90000"' u 1:2 w  lp lw 2 title '$J_3$(adaptive)',\
							'< sqlite3 Data/multigoal/EstimatesremainderFull/Multigoalremainder.db "SELECT DISTINCT DOFS_primal, relativeError3  from data WHERE DOFS_primal<= 90000"' u 1:2 w  lp lw 2 title '$J_4$(adaptive)',\
							'< sqlite3 Data/multigoal/EstimatesremainderFull/Multigoalremainder.db "SELECT DISTINCT DOFS_primal, Exact_Error  from data WHERE DOFS_primal <= 90000"' u 1:2 w  lp lw 2 linecolor "red" title '$J_\mathfrak{E}$(adaptive)',\
							1.5*x**(-0.6) lw 4 dashtype 2  title '$\mathcal{O}(\text{DOFs}^{-\frac{3}{5}})$',\
							5/x   lw 4 dashtype 2  title '$\mathcal{O}(\text{DOFs}^{-1})$'
							#plot  '< sqlite3 Data/multigoal/EstimateswithRemainder/dataMultigoal.db "SELECT DISTINCT DOFS_primal, Exact_Error from data WHERE DOFS_primal <= 90000 "' u 1:2 w  lp lw 3 title ' \small $|J(u)-J(u_h)|$ (a)',\
							'< sqlite3 Data/dataSinglegoal.db "SELECT DISTINCT DOFS_primal, Exact_Error from data WHERE DOFS_primal <= 90000_global "' u 1:2 w  lp lw 2 title ' \small $|J(u)-J(u_h)|$ (u)',\
							'< sqlite3 Data/multigoal/EstimateswithRemainder/dataMultigoal.db "SELECT DISTINCT DOFS_primal, Estimated_Error_remainder from data WHERE DOFS_primal <= 90000 "' u 1:2 w  lp lw 2 title '$\small|\eta^{(2)}_\mathcal{R}|$',\
							'< sqlite3 Data/multigoal/EstimateswithRemainder/dataMultigoal.db "SELECT DISTINCT DOFS_primal, abs(ErrorTotalEstimation) from data WHERE DOFS_primal <= 90000 "' u 1:2 w  lp lw 2 title '$\small\eta^{(2)}$',\
							'< sqlite3 Data/multigoal/EstimateswithRemainder/dataMultigoal.db "SELECT DISTINCT DOFS_primal, 0.5*abs(Estimated_Error_adjoint+Estimated_Error_primal) from data WHERE DOFS_primal <= 90000 "' u 1:2 w  lp lw 2 title '$\small\eta^{(2)}_h$',\
							1/x  dt 3 lw  4
							#0.1/sqrt(x)  lw 4,\
							0.1/(x*sqrt(x))  lw 4,
							# '< sqlite3 multigoal/dataMultigoal.db "SELECT DISTINCT DOFS_primal, Exact_Error from data WHERE DOFS_primal <= 90000 WHERE DOFS_primal <= 90000"' u 1:2 w lp title 'Exact Error',\
							'< sqlite3 multigoal/dataMultigoal.db "SELECT DISTINCT  DOFS_primal, Estimated_Error from data WHERE DOFS_primal <= 90000"' u 1:2 w lp title 'Estimated Error',\
							'< sqlite3 multigoal/dataMultigoal.db "SELECT DISTINCT  DOFS_primal, Estimated_Error_primal from data WHERE DOFS_primal <= 90000"' u 1:2 w lp title 'Estimated Error(primal)',\
							'< sqlite3 multigoal/dataMultigoal.db "SELECT DISTINCT  DOFS_primal, Estimated_Error_adjoint from data WHERE DOFS_primal <= 90000"' u 1:2 w lp title 'Estimated(adjoint)',\
							'< sqlite3 Data/multigoal/EstimateswithRemainder/dataMultigoal.db "SELECT DISTINCT DOFS_primal, abs(ErrorTotalEstimation)+abs(abs(\"Juh2-Juh\") + Exact_Error) from data WHERE DOFS_primal <= 90000 "' u 1:2 w  lp lw 2 title '$\small\eta^{(2)}$',\
							'< sqlite3 Data/multigoal/EstimateswithRemainder/dataMultigoal.db "SELECT DISTINCT DOFS_primal, abs(ErrorTotalEstimation)-abs(\"Juh2-Juh\"+ abs(Exact_Error)) from data WHERE DOFS_primal <= 90000 "' u 1:2 w  lp lw 2 title '$\small\eta^{(2)}$',\
						\end{gnuplot}
						\fi
						\scalebox{0.65}{\input{Figures/Examplewo9_archive.tex}}
						\captionof{figure}{Error reduction for the single functionals using adaptive refinement.}\label{Figure: multigoal + adptiveformerrors}
					\end{minipage}
				\end{figure}

%\newpage				
%%%%%%%%%%%%%%%%%%%%%%%%%%%%%%%%%%%%%%%%%%%%%%%%%%%%%%%%%%%%%%%%%%%%%%%%%%%%
\subsection{Multiple goal functionals}

In the second example, 
%\langer{
we again consider 
%%as before
the non-linear boundary value problem (\ref{Equation: Example1}), but with a different 
right-hand side and a different computational domain $\Omega$. 
%}
Specifically, we choose $f \equiv 1$. 
The computational domain is sketched in Figure~\ref{figure: multigoal + illustration} (left). 
This example already was  considered in our previous work
                        \cite{EnLaWi18} with the same parameters $p = 4$ and
                        $\varepsilon = 10^{-10}$. Furthermore, we consider the
                        same functionals of interest which are given by 
                       \begin{align*}
                       	J_1(u):= &(1+u(2.9,2.1))(1+u(2.1,2.9)),\\
                       	J_2(u):= &\left(\int_{\Omega} u(x,y)-u(2.5,2.5)\,d(x,y)\right)^2,\\	
                       	J_3(u):= &\int_{(2,3)\times(2,3)} u(x,y)\,d(x,y),\\
                       	J_4(u):= &u(0.6,0.6),
                       \end{align*}
                       with the same approximations as in \cite{EnLaWi18},
                       where a reference solution  on a fine grid ($8$ uniform
                       refinements, $Q_c^2$ elements, $22\,038\,525$
                       \text{DOFs}) 
was computed on the cluster RADON1\footnote{\url{https://www.ricam.oeaw.ac.at/hpc/overview/}}.

In the following, we discuss and interpret our observations.
	In Figure \ref{Figure: multigoal + errorestimatorpartscomparison}, we can observe that we indeed obtain an improved convergence rate for our error functional $J_\mathfrak{E}$.
	By comparing the error reductions in the single functionals for uniform and adaptive refinement in  Figure~\ref{Figure: multigoal + uniformerrors} and Figure~\ref{Figure: multigoal + adptiveformerrors}, respectively, we observe similar convergence rates in all functionals as well as an improvement for the adaptive approach. 
	However, this does not necessarily hold true for all functionals as shown in~\cite{EnWi17}.  
	Monitoring the different types of errors, we observe that the remainder part is indeed of higher order.
	Furthermore, both error estimators almost coincide with the true error. 
	This leads to effectivity indices close to one, which are provided in Figure~\ref{Figure: multigoal + Ieffs}. 
	This figure also shows that it is not sufficient to consider only the primal part of the error estimator.

%%%%%%%%%%%%%%%%%%%%%%%%%%%%%%%%%%%%%%%%%%%%%%%%%%%%%%%%%%%%%%%%%%%%%%%%%%%%%%%%
\section{Conclusions}
\label{Section: Conclusions}

In this work, we further investigated and developed a posteriori error
                estimation and mesh adaptivity using the dual-weighted
                residual method for treating multiple goal functionals. This 
                framework includes both nonlinear PDEs and nonlinear goal
                functionals, estimation of the discretization error and the 
                nonlinear iteration error. The latter can be used as stopping 
                criterion for the nonlinear solver,
                	e.g., for the Newton solver that is used in our 
                numerical experiments.
                Using a saturation assumption, we could establish the
                efficiency of the error estimator. These theoretical findings 
                give insight 
                into
                the influence of the choice of the enriched space 
                	that is used 
                to approximate the unknown exact solution in the error
                estimator.
                Our developments
                are substantiated with carefully designed numerical tests. 
                Moreover, our studies also include investigations of the influence of the
                remainder term 
                to
                the error estimator. 
                Summarizing,
                we have designed a well-tested
                framework for the regularized $p$-Laplacian that will be extended in future work to some of the promised 
                (stationary) multiphysics applications mentioned in the introduction.

		%%%%%%%%%%%%%%%%%%%%%%%%%%%%%%%%%%%%%%%%%%%%%%%%%%%%%%%%%%%%%%%%%%%%%%%%%%%%%%%%
   \section{Acknowledgments}
   This work has been supported by the Austrian Science Fund (FWF) under the grant
   P 29181
   `Goal-Oriented Error Control for Phase-Field Fracture Coupled to Multiphysics Problems'.
   The third author was supported by RICAM
   %Linz
   during his visit at
   %the Johannes Kepler University./lit
   Linz in August 2018.
   The authors would like to thank
   D. Jodlbauer, 
   A. Schafelner and
   W. Zulehner for helpful discussions.

		%%%%%%%%%%%%%%%%%%%%%%%%%%%%%%%%%%%%%%%%%%%%%%%%%%%%%%%%%%%%%%%%%%%%%%%%%%%%%%%%
		% bibliography
		%
		\bibliography{./lit.lib}
		\bibliographystyle{abbrv}
		%%%%%%%%%%%%%%%%%%%%%%%%%%%%%%%%%%%%%%%%%%%%%%%%%%%%%%%%%%%%%%%%%%%%%%%%%%%%%%%%

		%##############################################################################%
	\end{document}
	%##############################################################################%
	
	% EOF